\documentclass[11pt]{amsart}

\setlength{\hoffset}{-1in}
\setlength{\voffset}{-1in}
\setlength{\oddsidemargin}{1.2in}
\setlength{\evensidemargin}{1.2in}
\setlength{\textwidth}{6.in}
\setlength{\textheight}{8.5in}
\setlength{\topmargin}{0.9in}
\setlength{\baselineskip}{14pt}

\usepackage[colorlinks = true,
            linkcolor = red,
            urlcolor  = magenta,
            citecolor = black,
            anchorcolor = blue]{hyperref}
\usepackage{color}
\usepackage{comment}
\usepackage[shortalphabetic,initials,nobysame]{amsrefs}
\usepackage{upgreek}
\usepackage{tikz}
\usepackage[applemac]{inputenc} 
\usetikzlibrary{arrows}
\usepackage{xcolor}
\usepackage[all]{xy}
\usepackage{graphicx}
\usepackage{wrapfig}
\usepackage{yfonts}
\usepackage{enumerate}
\usepackage{graphicx}
\usepackage{amssymb}
\usepackage{epstopdf}
\usepackage{mathrsfs}
\usepackage[mathcal]{eucal}
\usepackage{bm}

\renewcommand{\eprint}[1]{\href{https://arxiv.org/abs/#1}{#1}}

\DeclareMathOperator{\diag}{diag}

\BibSpec{article}{%
    +{}  {\PrintAuthors}                {author}
    +{,} { \textit}                     {title}
     +{,} {}                            {note}
    +{.} { }                            {part}
    +{:} { \textit}                     {subtitle}
    +{,} { \PrintContributions}         {contribution}
    +{.} { \PrintPartials}              {partial}
    +{,} { }                            {journal}
    +{}  { \textbf}                     {volume}
    +{}  { \PrintDate}                {date}
    +{,} { \issuetext}                  {number}
    +{,} { \eprintpages}                {pages}
    +{,} { }                            {status}
    +{,} { \DOI}                   {doi}
    +{,} { \eprint}        {eprint}
      +{,} {\publisher}                {publisher}
    +{,} { \address}                   {address}
    +{}  { \parenthesize}               {language}
    +{}  { \PrintTranslation}           {translation}
    +{;} { \PrintReprint}               {reprint}
    +{.} {}                             {transition}
    +{}  {\SentenceSpace \PrintReviews} {review}
}


\newtheorem{Thm}{Theorem}[section]
\newtheorem{Lem}[Thm]{Lemma}
\newtheorem{Prop}[Thm]{Proposition}
\newtheorem{Cor}[Thm]{Corollary}

\theoremstyle{definition}
\newtheorem{Def}[Thm]{Definition}
\theoremstyle{remark}
\newtheorem{Rem}[Thm]{Remark}

\newtheoremstyle{named}{}{}{\itshape}{}{\bfseries}{.}{.5em}{#1 #3}
\theoremstyle{named}

\def\Q{\mathbb{Q}}

\def\g{\mathfrak{g}}
\def\Frenkel:2013uda{\mathfrak{h}}

\def\cC{\mathcal{C}}

\def\cL{\mathcal{L}}

\def\cV{\mathcal{V}}

\def\L{\Lambda}

\def\bo{\textbf{o}}

\def\=>{\Longrightarrow}

\def\to{\longrightarrow}

\def\o+{\oplus}
\def\bo+{\bigoplus}

\def\<{\langle}
\def\>{\rangle}
\def\({\left(}
\def\){\right)}

\def\^{\wedge}
\def\+{\dagger}

\def\dd[#1,#2]{\frac{d#1}{d#2}}
\def\del[#1,#2]{\frac{\partial #1}{\partial #2}}
\def\over[#1]{\overline{#1}}
\def\vec[#1]{\overrightarrow{#1}}



\makeatletter
\def\mr@ignsp#1 {\ifx\:#1\@empty\else #1\expandafter\mr@ignsp\fi}%
\newcommand{\multiref}[1]{\begingroup
\xdef\mr@no@sparg{\expandafter\mr@ignsp#1 \: }%
\def\mr@comma{}%
\@for\mr@refs:=\mr@no@sparg\do{\mr@comma\def\mr@comma{,}\ref{\mr@refs}}%
\endgroup}
\makeatother

\newcommand{\hypref}[2]{\ifx\href\asklFrenkel:2013udaas #2\else\href{#1}{#2}\fi}

\newcommand{\secref}[1]{Sec.~\multiref{#1}}

\newcommand{\figref}[1]{Fig.~\multiref{#1}}

\tikzset{->-/.style={decoration={
  markings,
  mark=at position .5 with {\arrow{latex}}},postaction={decorate}}}
\tikzset{
    >=latex
    }


\newcommand{\nc}{\newcommand}
\nc{\on}{\operatorname}
\nc{\la}{\lambda}
\nc{\wh}{\widehat}
\nc{\ghat}{\wh\g}
\nc{\mb}{\mathbf}

\begin{document}
\title{$q$-Opers and Bethe Ansatz for Open Spin Chains I}

\author[P. Koroteev]{Peter Koroteev}
\address{
\newline
Beijing Institute for Mathematical Sciences and Applications
\newline
Beijing, China
\newline
\href{mailto:peter.koroteev@gmail.com}{peter.koroteev@gmail.com}
}

\author[M. Shim]{Myungbo Shim}
\address{
\newline
          Yau Mathematical Sciences Center
          Tsinghua University
          \newline
          Beijing, China
        \newline
        \href{mailto: mbshim@tsinghua.edu.cn}{mbshim@tsinghua.edu.cn}
          }

\author[R. Singh]{Rahul Singh}
\address{
\newline
          Yau Mathematical Sciences Center
          Tsinghua University
        \newline
          Beijing, China 
          \newline
          \href{mailto: 95rahul32@gmail.com}{95rahul32@gmail.com}
          }

\date{\today}

\numberwithin{equation}{section}

\begin{abstract}
In a nutshell, the classical geometric $q$-Langlands duality can be viewed as a correspondence between the space of $(G,q)$-opers and the space of solutions of $^L\mathfrak{g}$ XXZ Bethe Ansatz equations. The latter describes spectra of closed spin chains with twisted periodic boundary conditions and, upon the duality, the twist elements are identified with the $q$-oper connections on a projective line in a certain gauge. In this work, we initiate the geometric study of Bethe Ansatz equations for spin chains with open boundary conditions. We introduce the space of $q$-opers whose defining sections are invariant under reflection through the unit circle in a selected gauge. The space of such reflection-invariant $q$-opers in the presence of certain nondegeneracy conditions is thereby described by the corresponding Bethe Ansatz problem. We compare our findings with the existing results in integrable systems and representation theory. This paper discusses the type-A construction leaving the general case for the upcoming work.
\end{abstract}

\maketitle

\setcounter{tocdepth}{1}
\tableofcontents

\section{Introduction}
The Geometric Langlands program has a multitude of layers with different levels of abstraction. In recent years, there has been significant progress in understanding the geometric $q$-Langlands correspondence \cites{Aganagic:2017la,KSZ,Frenkel:2020} which can be viewed as a $q$-deformation of the well-established duality that has recently been proved in a sufficiently general setting \cite{gaitsgory2025proofgeometriclanglandsconjecture}.
The $q$-version of the correspondence, however, is not yet at the same level of rigor.

Nevertheless, the ideas around $q$-Langlands have already led to a plethora of insightful results in mathematical physics, in particular, in understanding geometric structures of integrable systems as well as dualities between them. We refer the reader to recent review articles and lecture notes \cites{koroteev2023quantumgeometryintegrabilityopers,doi:10.1142/S0217751X24460102,zeitlin2024geometricrealizationsbetheAnsatz} and references therein.

\subsection{\texorpdfstring{$q$}{q}-Opers, \texorpdfstring{$QQ$}{QQ}-Systems and Bethe Ansatz}
One such statement relates the space of $(G,q)$-opers for a simple simply-connected complex Lie group $G$ and the space of solutions of $^L\mathfrak{g}$ XXZ Bethe Ansatz equations for a closed spin chain with twisted periodic boundary conditions.
In full generality it was studied in \cite{Frenkel:2020} where an explicit $q$-difference oper connection was constructed as a meromorphic section $A\in \text{Hom}(\mathcal{F}_G,\mathcal{F}^q_G)$, where $\mathcal{F}_G$ is a principal $G$-bundle over $\mathbb{P}^1$ and $\mathcal{F}^q_G$ is its pullback under the multiplicative $q$-shift of the base coordinate in the local trivialization $z\mapsto qz$. The $q$-oper condition imposes $A$ to take values in a Coxeter Bruhat cell for the Borel subgroup $B_-\subset G$ and the Miura condition ensures that $A$ preserves the reduction to $\mathcal{F}_{B_+}$. The $q$-oper is then equipped with the set of regular singularities which are described by polynomials $\Lambda_i(z)$ and twisted condition which requires $A$ to be $q$-gauge equivalent to a regular semisimple element of the torus $A(z)=g(qz)Zg(z)^{-1}$, where $Z\in H\subset H(z)$.

In the presence of certain mild nondegeneracy conditions it was proved in \cite{Frenkel:2020} that the space of $(G,q)$-opers admits two equivalent descriptions. The former is the system of finite $q$-difference equations on the family of polynomials $\{Q_\pm^i(z)\}_{i=1,\dots,r}$ referred to as the $QQ$-system for the Langlands dual group $^LG$. The Miura $(G,q)$-oper connection can thus be written as an element of $N_-(z)\prod_i \Lambda_i(z)^{\check{\alpha}_i}s_i N_-(z)\cap B_+(z)$. where $s_i$ are lifts of the Weyl group of  elements as
\begin{equation}
A(z)=\prod_i g_i(z)^{\check{\alpha}_i} \exp \frac{\Lambda_i(z)}{g_i(z)}\,,\qquad g_i(z)=\zeta_i\frac{Q_+^i(qz)}{Q_+^i(z)}\,.
\end{equation} 
The $QQ$-system equations can be viewed as obstructions to diagonalizing the above $q$-connection. These equations have been extensively studied in the literature on representation theory and quantum integrability.

The latter description is in terms of celebrated Bethe Ansatz equations for the XXZ spin chain which were studied in the representation theory literature \cites{Frenkel:2013uda,Frenkel:2016}. The Bethe equations describe the spectrum of $U_q(^L\hat{\mathfrak{g}})$ XXZ Hamiltonians (modulo some subtleties for non-simply laced groups, see \cite{Frenkel:2021vv}). The aforementioned Bethe Ansatz equations describe the spectrum of a \textit{closed} quantum spin Hamiltonian which acts on the tensor product of evaluation modules of $U_q(^L\hat{\mathfrak{g}})$ with \textit{twisted periodic} boundary conditions. 

On the left of Figure \ref{fig:foldedchain} tensor modules $V_1(a_1)\otimes\dots\otimes V_n(a_n)$ from the Hilbert space of the XXZ Hamiltonians are represented with vertical arrows on a cylinder. One of the representations $V_i(a_i)$ is intertwined with all the others via $R$-matrices of $U_q(^L\hat{\mathfrak{g}})$ (red circles) and in addition acted upon by the twist element $Z$. This action can be formalized in terms of the quantum Knizhnik-Zamolodchikov equation (qKZ) \cite{FR1998} $\Psi(q^\sigma a)=S_\sigma^{\text{closed}}\cdot \Psi(a)$ where $\Psi(a)$ is the state from the tensor product of the evaluation representations and $S_\sigma^{\text{closed}}$ stands for the composition of the $R$-matrices and the twist element $Z$. Here $\sigma$ labels the evaluation parameter which is being $q$-shifted.  In the $q\to 1$ limit the qKZ equations become the eigenvalue equations for operators $S_\sigma^{\text{closed}}$ and their spectrum can be obtained by solving Bethe Ansatz equations. 

\begin{figure}
\includegraphics[scale=0.26]{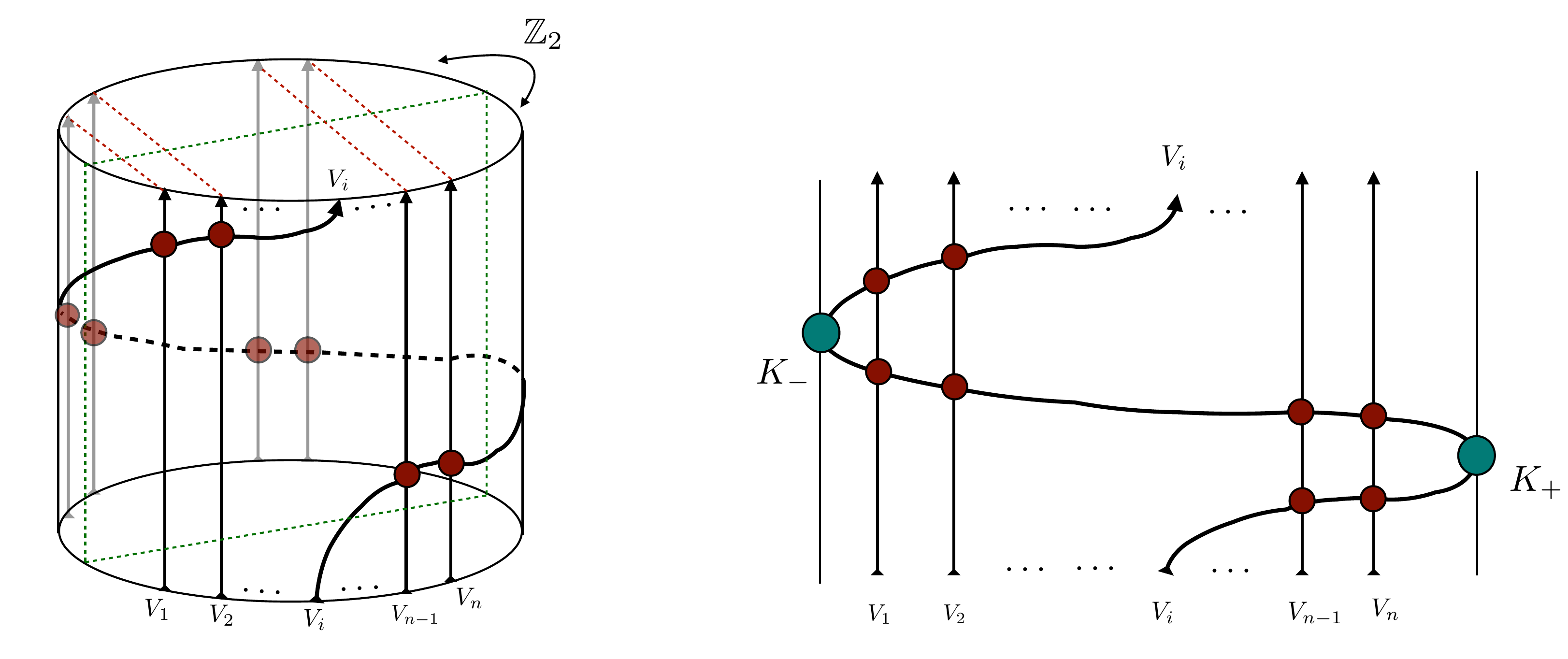}
\caption{Orbifolding trick yields an open spin chain from a closed one.}
\label{fig:foldedchain}
\end{figure}

In \cite{Okounkov:2015aa} and works that followed the qKZ connection and its solutions (as well as its mirror cousin dynamical equation where the twist variables are $q$-shifted) were understood geometrically through the equivariant quasimap counts to Nakajima quiver varieties. 
This approach in part inspires our work as well. In fact, it would be an interesting question for enumerative geometry to find the setup which realizes our Bethe Ansatz equations for open chains.

\subsection{Bethe Ansatz for Open Chains}
The right side of Figure \ref{fig:foldedchain} depicts the \textit{open} version of the qKZ equation known in the literature as the \textit{boundary qKZ equation} \cites{CheredInv,CheredNewOpen,Jimbo_1995,Reshetikhin:2013aa, Reshetikhin:2014aa}. Schematically, this equation can be written as 
$\Psi(q^\sigma a)=S_\sigma^{\text{open}}\cdot \Psi(a)$ where the new operator $S_\sigma^{\text{open}}$ contains the tensor product of $R$-matrices (red circles) as previously, but in addition, it also contains boundary $K$-matrices $K_+$ and $K_-$ for the right and left boundaries respectively. 
Together $K$ and $R$ matrices satisfy the so-called \textit{reflection equation} (boundary Yang Baxter equation).
The $K$-matrices depend on their own independent boundary parameters which will appear in the resulting Bethe equations for the open chain in question.

\subsection{Folding}
The left and right hand sides of Figure \ref{fig:foldedchain} can be related to each other via `folding' -- a certain $\mathbb{Z}/2\mathbb{Z}$ action on the data of the XXZ spin chain/qKZ equation. The figure shows that if the number of $V_i(a_i)$ terms in the state of the system is even, one can split them into two halves and identify the factors accordingly. In this way, we are selecting a certain $\mathbb{Z}/2\mathbb{Z}$-invarint subsector in the space of solutions of the qKZ equation. The folding trick also produces $K$ matrices at the orbifold points.

While the idea of folding has been discussed in the literature on integrable systems for a long time, to the best of our knowledge
it has never been implemented. The goal of this work is to provide a geometric construction of the folding trick and show that the Bethe Ansatz equations for the open XXZ chains in type $A$ can be derived from the space of so-called \textit{reflection-invariant $q$-opers}. We will continue this program for $(G,q)$-opers in the upcoming work.

In physics literature, in particular, in the context of the gauge/Bethe correspondence \cite{Nekrasov:2009uh}, open spin chains have been studied in \cites{Wang:2024aa,Kimura:2020aa,Frassek:2015aa}. We shall refer to some of those results in the paper.

\subsection{Structure of the Paper}
In the following section, we introduce the notion of reflection-invariant $(GL(2),q)$-opers and show that they are described by Bethe Ansatz equations for open chains. Section \ref{ref inv gln opers} provides a generalization of that construction to reflection-invariant $(GL(N),q)$-opers. In Section \ref{epsilon opers}, we address the folding construction for open XXX spin chains. The final Section \ref{Sec:Connect} describes in detail the match between Bethe equations obtained from our geometric construction and the equations from the existing literature.

\subsection{Acknowledgments}
We would like to thank Bart Vlaar for continuous discussions about the ideas around mathematical description of open spin chains over the last year. PK thanks Beijing Junior Talent Award Committee for financial support. PK also thanks T. Knight, C. Pacific, and Y. Zhang for support during the recent months. MS is supported by the Beijing Natural Science Foundation (BJNSF) grant IS24010 and by the Shuimu Scholar program of Tsinghua University.

\section{Reflection-invariant \texorpdfstring{$(GL(2),q)$}{(GL(2),q)}-Opers}\label{GL2}
We begin with developing our construction for $(GL(2),q)$-connections followed by the higher rank case in the next section. The beginning of this section goes along the lines of Section 3 of \cite{KSZ}. The construction follows the logic of $Z$-twisted Miura $q$-opers which were described using Bethe Ansatz for the XXZ spin chain.

Let $q\in\mathbb{C}^\times$ and not a root of unity. Consider the holomorphic vector bundle $E$ of rank $2$ over $\mathbb{P}^1$ that is equivariant with respect to the scaling of the base $M_q:\mathbb{P}^{1}\to \mathbb{P}^{1}$ that sends $z\mapsto qz$. Let $E^q$ denote the pullback of $E$ under $M_q$.

\begin{Def}
    For a Zariski open dense subset $U\subset\mathbb{P}^1$ consider $V=U\cap M_q^{-1}(U)$. A meromorphic $(GL(2),q)$-oper on $\mathbb{P}^1$ is a triple $(E,A,\mathcal{L})$, where $E$ is a rank-two holomorphic vector bundle on $\mathbb{P}^1$, $\mathcal{L}\subset E$ is a line subbundle, and $A\in\text{Hom}_{\mathcal{O}_{V}}(E,E^q)$ such that the restriction map
\begin{equation}
\label{eq:orbopdef}
\bar{A}: \mathcal{L}\to \left(E/\mathcal{L}\right)^{q}
\end{equation}
is an isomorphism on $V$. 
\end{Def}

The trivialization change of $E$ via $g(z) \in GL(2)(z)$ changes
$A(z)$ by the following $q$-gauge transformation
\begin{equation}   
\label{gauge tr}
A(z)\mapsto g(qz)A(z)g(z)^{-1}.
\end{equation}
giving $A$ the structure of $(GL(2),q)$-connection. 

\vskip.1in
Let $\mathcal{L}=\text{Span}(s(z))$ in some trivialization on $V$. Then the oper condition \eqref{eq:orbopdef} can be presented as
\begin{equation}
\label{eq:opercondtriv}
s(qz)\wedge A(z)s(z) = \Lambda(z)\,,
\end{equation}
where roots of $\Lambda(z)$ are the oper singularities.

\begin{Def}
A meromorphic Miura $(GL(2),q)$-oper on $\mathbb{P}^1$ is a quadruple $(E,A,\mathcal{L}, \hat{\mathcal{L}})$, where the triple $(E,A,\mathcal{L})$ is the $(GL(2),q)$-oper as above and the line subbundle $\hat{\mathcal{L}}$ is preserved by the $q$-connection $A(z)$.

We require that flags $\mathcal{L}$ and $\hat{\mathcal{L}}$ are in generic relative position.
\end{Def}

In local coordinates, in $\hat{\mathcal{L}}=\text{Span}\begin{pmatrix}
    1\\0
\end{pmatrix}$ the $q$-connection takes the following form
\begin{equation}
A(z)=\left(
\begin{array}{cc}
 g_1(z) & \Lambda (z) \\
 0 & g_2(z) \\
\end{array}
\right)
\end{equation}
for some meromorphic functions $g_{1,2}(z)$ and $\Lambda(z)$. In this gauge $\mathcal{L}=\text{Span}\begin{pmatrix}
    0\\1
\end{pmatrix}$.

\begin{Def}
The Miura $(G,q)$-oper is called \textit{generalized $Z$-twisted} if there is a $q$-gauge transformation $v(z)\in B_+(z)$ such that
\begin{equation}\label{eq:MiuraGen}
A(z)=v(qz)Z(z)v(z)^{-1}
\end{equation}
where $Z(z)\in H(z)$. In local coordinates,
\begin{equation}
\label{eq:qConnDiag}
Z(z) = 
\begin{pmatrix}
\xi_1(z) & 0\\
0 & \xi_2(z) 
\end{pmatrix}\,,
\end{equation}
When $Z(z)\in H\subset H(z)$, we recover the $Z$-twisted $q$-oper.
\end{Def}

In addition, we require that $Z(z)$ is regular semisimple, which means that the locus where $\xi_1(z)=\xi_2(z)$ should be excluded.

\vskip.1in
Now we chose the gauge where the $q$-connection has the form \eqref{eq:qConnDiag}. Let the Miura gauge transformation be as follows
\begin{equation}
v(z)=\left(
\begin{array}{cc}
 Q_+(z)^{-1} & Q_-(z) \\
 0 & Q_+(z) \\
\end{array}
\right)
\end{equation}
where $Q_-(z)\in \mathbb{C}(z)$ such that in this gauge
\begin{equation}\label{eq:sSecGauge}
    s(z)=v(z)\begin{pmatrix}
    0\\1
\end{pmatrix}=\begin{pmatrix}
    Q_-(z)\\Q_+(z)
\end{pmatrix}\,.
\end{equation}
Then the right hand side of the Miura condition for the generalized $Z$-twisted $q$-oper \eqref{eq:MiuraGen} is satisfied provided that
the \textit{generalized $QQ$-system} holds
\begin{equation}\label{eq:LambdaMatching11}
\xi_2(z) Q_+(z)Q_-(q z)-\xi_1(z) Q_-(z) Q_+(q z)=\Lambda(z)\,.
\end{equation}
Notice that this is precisely the quantum Wronskian condition \eqref{eq:opercondtriv} for $s(z)$ given by \eqref{eq:sSecGauge}.
Here the adjective `generalized' refers to the coordinate dependence of the twists. 

This brings us to the following
\begin{Prop}
    There is a one-to-one correspondence between the set of generalized $Z$-twisted Miura $(GL(2),q)$-opers and the set of solutions of the generalized $QQ$-system \eqref{eq:LambdaMatching11}.
\end{Prop}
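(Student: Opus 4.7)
The plan is to extract the bijection directly from the computation the authors already performed leading up to equation \eqref{eq:LambdaMatching11}, and then to verify that both directions are well-defined, inverse to each other, and invariant under the residual gauge freedom.

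For the forward map, I would start with a generalized $Z$-twisted Miura $(GL(2),q)$-oper and work in the Miura gauge where $\hat{\cL}=\Span(1,0)^T$ and $A(z)$ is upper triangular. The $Z$-twist condition produces $v(z)\in B_+(z)$ with $A(z)=v(qz)Z(z)v(z)^{-1}$; since $v$ is upper-triangular I parametrize it exactly as in the text with rational entries $P_1,P_2$ and an off-diagonal ratio $Q_-/Q_+$. The section $s(z)=v(z)(0,1)^T$ then spans $\cL$, and the oper condition \eqref{eq:opercondtriv} becomes \eqref{eq:LambdaMatching1}. Normalizing by a diagonal gauge so that $P_1(z)=P_2(z)^{-1}=Q_+(z)$ puts $s(z)$ in the form \eqref{eq:sSecGauge} and reduces \eqref{eq:LambdaMatching1} to the generalized $QQ$-system \eqref{eq:LambdaMatching11}. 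Thus the Miura oper yields a solution $(Q_+,Q_-)$ of the $QQ$-system (together with the fixed data $\xi_1,\xi_2,\Lambda$).

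For the backward map, given a solution $(Q_+,Q_-)$ of \eqref{eq:LambdaMatching11} I would take $E=\cO\oplus\cO$, set $A(z):=v(qz)Z(z)v(z)^{-1}$ with the same $v(z)$ constructed from $Q_\pm$ via the normalization $P_1=P_2^{-1}=Q_+$, and declare $\hat{\cL}=\Span(1,0)^T$, $\cL=\Span(Q_-,Q_+)^T$. The Miura condition is automatic since the conjugate by an upper-triangular $v$ of a diagonal $Z$ preserves $\hat{\cL}$ by construction; the generic-position requirement is equivalent to $Q_+\not\equiv 0$; and the oper condition \eqref{eq:opercondtriv} reproduces exactly \eqref{eq:LambdaMatching11}. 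To see the two maps are mutual inverses, I would observe that both the normalization step in the forward direction and the construction in the backward direction are determined uniquely by the requirement that $s(z)=(Q_-,Q_+)^T$ with $Q_\pm$ coprime (up to an overall nonzero constant, which is the only residual symmetry).

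The main obstacle I anticipate is controlling the residual gauge freedom and verifying polynomiality. A $Z$-twisted gauge $v$ is only defined up to right multiplication by elements of the stabilizer of $Z$ inside $B_+(z)$; for regular semisimple $Z$ this stabilizer is the diagonal torus, which rescales $P_1,P_2$ and multiplies $Q_\pm$ by common rational factors. Nailing down the canonical representative where $P_1=P_2^{-1}=Q_+$ is what makes the correspondence a genuine bijection rather than only a surjection. The second subtlety is to argue that $Q_+,Q_-$ are polynomials (not merely rational), which I would deduce from the fact that the right-hand side $\Lambda(z)$ of \eqref{eq:LambdaMatching11} is polynomial and that $A(z)$ is regular away from the oper singularities: any shared rational factor between $Q_+$ and $Q_-$ would either cancel (absorbing into the freedom just described) or introduce spurious poles in $A(z)$ violating the oper structure. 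Once these two points are in hand, the correspondence is an immediate consequence of the computation already carried out in the text.
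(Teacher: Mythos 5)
Your proposal is correct and follows essentially the same route as the paper, which offers no separate proof beyond the computation preceding the Proposition: the gauge transformation $v(z)$ with the normalization $P_1=P_2^{-1}=Q_+$ turns the oper condition \eqref{eq:opercondtriv} into the quantum Wronskian \eqref{eq:LambdaMatching11}, and the correspondence is read off from that. Your additional care with the inverse map, the residual torus-valued gauge freedom, and the (Laurent) polynomiality of $Q_\pm$ fills in details the paper leaves implicit rather than constituting a different argument.
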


In the earlier work \cite{KSZ} line bundle $\mathcal{L}$ was trivial on $\mathbb{P}^1\backslash\infty$ so $Q_\pm(z)$ could be chosen as (Taylor) polynomials. In this work, however, we study opers on a $\mathbb{Z}/2\mathbb{Z}$ orbifold of $\mathbb{P}^1$ so we need to consider Laurent polynomials instead which have no poles other than $0,\infty\in \mathbb{P}^1$. We shall discuss these matters momentarily and the orbifold condition will force the components of the section $s(z)$ to belong to a subset of symmetric Laurent polynomials: $Q_\pm(z)\subset\mathbb{C}\left[z+\frac{1}{z}\right]\subset\mathbb{C}[z,z^{-1}]$.

\subsection{Reflection-Invariant \texorpdfstring{$(GL(2),q)$}{(GL(2),q)}-Opers}\label{ref construction}
In order to achieve our goal of recovering Bethe Ansatz equations for the open spin chains, we need to supplement the above $q$-oper construction with the following $\mathbb{Z}_2$ involution $T$ acting on the base $\mathbb{P}^1$ as $z\mapsto z^{-1}$. 

\begin{Def}\label{Def:REflInf2}
    A generalized $Z$-twisted Miura $(GL(2),q)$-oper is called \textit{reflection-invariant} if there exists a $q$-gauge transformation $v(z)\in B_+(z)$ such that $s(z)=v(z)\begin{pmatrix}
        0\\
        1
    \end{pmatrix}$ is invariant under $T$: $s(\frac{1}{z})=s(z)$ and the $q$-connection takes the form \eqref{eq:qConnDiag}.
\end{Def}

In other words, the reflection invariance of the $q$-oper assumes that there exists a gauge in which a section of the line $\mathcal{L}$ is invariant under the reflection of the base local coordinate through the unit circle.

In particular in means that we can represent
\begin{equation}\label{eq:QSymDef}
    Q_+(z)=\delta\prod_{i=1}^k(z-s_i)\left(\frac{1}{z}-s_i\right)
\end{equation}
in this gauge. It is evident that $Q_+(z)\in \mathbb{C}\left[z+\frac{1}{z}\right]$.

\vskip.1in
We need the following lemma which is a consequence of the definition of determinants.
\begin{Lem}\label{Th:DetTransform}
   Let $F$ be any field and let $M\in GL_{n}(F)$. For any $u_{1},\ldots,u_{n}\in F^{n}$ and $1\leq i\leq n$, $u_{1}\land\cdots\land u_{i-1}\land Mu_{i}\land u_{i+1}\land\cdots\land u_{n}$ is equal to
\begin{equation}
\det(M)\left(M^{-1}u_{1}\right)\land\cdots\land \left(M^{-1}u_{i-1}\right)\land u_{i}\land \left(M^{-1}u_{i+1}\right)\land \cdots\land\left(M^{-1}u_{n}\right).
\end{equation}
\end{Lem}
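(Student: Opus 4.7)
The plan is to reduce the claim to the single classical identity
\begin{equation*}
(Mv_{1})\wedge\cdots\wedge(Mv_{n}) \;=\; \det(M)\,v_{1}\wedge\cdots\wedge v_{n},
\end{equation*}
which holds for any $v_{1},\ldots,v_{n}\in F^{n}$ and any $M\in GL_{n}(F)$ and is essentially the definition of $\det(M)$ via its action on the top exterior power $\Lambda^{n}F^{n}$.

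First I would introduce the auxiliary vectors $w_{j}:=M^{-1}u_{j}$ for $j\neq i$ and set $w_{i}:=u_{i}$; this is legitimate because $M$ is invertible by hypothesis. By construction $Mw_{j}=u_{j}$ for $j\neq i$, while $Mw_{i}=Mu_{i}$. Consequently the left-hand side of the lemma can be rewritten as $(Mw_{1})\wedge\cdots\wedge(Mw_{n})$, with $M$ now applied uniformly to every slot.

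Applying the classical identity pulls out a factor of $\det(M)$ and yields $\det(M)\,w_{1}\wedge\cdots\wedge w_{n}$. Substituting back the definitions of the $w_{j}$ gives
\begin{equation*}
\det(M)\bigl(M^{-1}u_{1}\bigr)\wedge\cdots\wedge\bigl(M^{-1}u_{i-1}\bigr)\wedge u_{i}\wedge\bigl(M^{-1}u_{i+1}\bigr)\wedge\cdots\wedge\bigl(M^{-1}u_{n}\bigr),
\end{equation*}
which is precisely the right-hand side claimed.

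There is no real obstacle: the lemma is simply a repackaging of the multiplicativity of the top wedge under $GL_{n}(F)$, arranged so that the action of $M$ on the $i$-th slot is traded for the action of $M^{-1}$ on every other slot. The only points to watch are that $M^{-1}$ exists (given) and that the rearrangement is purely substitutional, so no Leibniz-type signs are generated in the process.
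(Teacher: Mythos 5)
Your argument is correct and is exactly the reasoning the paper has in mind: the authors state the lemma without proof, calling it ``a consequence of the definition of determinants,'' which is precisely your reduction to $(Mv_{1})\wedge\cdots\wedge(Mv_{n})=\det(M)\,v_{1}\wedge\cdots\wedge v_{n}$ via the substitution $w_{j}=M^{-1}u_{j}$ for $j\neq i$. Nothing further is needed.
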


We will see below that the reflection invariance will impose certain transformation properties on the $q$-connection and  $\Lambda(z)$ which controls the oper singularities.

\begin{Thm}\label{Prop:ReflectionInv}
    For a generalized $Z$-twisted reflection-invariant Miura $(GL(2),q)$-oper we have the following condition on the oper singularities
    \begin{equation}\label{eq:LambdaRefl}
     \Lambda\left(\frac{1}{qz}\right)=\Lambda(z).
  \end{equation}
Moreover, in the gauge where line $\mathcal{L}$ is $T$-invariant, we have
    \begin{equation}
    \label{eq:qConnReflection}
    A(z)A\left(\frac{1}{qz}\right)=-\det A \left(\frac{1}{qz}\right)\cdot 1\,,
  \end{equation}
where $1$ is the $2\times 2$ identity matrix.
\end{Thm}
\begin{proof}
The oper condition \eqref{eq:opercondtriv} at the point $\frac{1}{z}$ reads
\begin{equation}
s\left(\frac{q}{z}\right)\land A\left(\frac{1}{z}\right)s\left(\frac{1}{z}\right)=\Lambda\left(\frac{1}{z}\right).
\end{equation}
Since $s$ is invariant under $T$, that is, $s\left(\frac{1}{z}\right)=s\left(z\right)$, we have
\begin{equation}
s\left(\frac{q}{z}\right)\land A\left(\frac{1}{z}\right)s\left(z\right)=\Lambda\left(\frac{1}{z}\right).
\end{equation}
Substituting $qz$ in place of $z$, we have
\begin{equation}
s\left(\frac{1}{z}\right)\land A\left(\frac{1}{qz}\right)s\left(qz\right)=\Lambda\left(\frac{1}{qz}\right).
\end{equation}
By the invariance of $s(z)$ under $T$, we get
\begin{equation}
s\left(z\right)\land A\left(\frac{1}{qz}\right)s\left(qz\right)=\Lambda\left(\frac{1}{qz}\right).
\end{equation}
The above Lemma, together with \eqref{eq:qConnDiag} gives
\begin{equation}
\det A \left(\frac{1}{qz}\right)A\left(\frac{1}{qz}\right)^{-1}s\left(z\right)\land s\left(qz\right)=\Lambda\left(\frac{1}{qz}\right),
\end{equation}
or, equivalently
\begin{equation}\label{eq:opercondreflpoint}
-s\left(qz\right)\land\det A \left(\frac{1}{qz}\right)A\left(\frac{1}{qz}\right)^{-1}s\left(z\right)=\Lambda\left(\frac{1}{qz}\right).
\end{equation}
Now let us prove that $\Lambda(z)=\Lambda(1/qz)$. Indeed, subtracting \eqref{eq:opercondreflpoint} from \eqref{eq:opercondtriv}, we get the following equation
\begin{equation}\label{eq:subtracting}
s(qz)\wedge \left(A(z)+\det A \left(\frac{1}{qz}\right)A\left(\frac{1}{qz}\right)^{-1}\right)s(z) 
= \Lambda(z)-\Lambda\left(\frac{1}{qz}\right).
\end{equation}
We can see that the right hand side in the above equation depends on the oper singularities -- roots of $\Lambda(z)$ whereas the left hand side does not. 
It means that both sides must vanish independently.

In the right hand side, we have a difference of two Laurent polynomials. Thus, at a generic point $z\in\mathbb{P}^1$, the coefficients in front of each power $z^i$ of $\Lambda(z)-\Lambda(1/qz)$ must vanish. This forces $\Lambda(z)\in\mathbb{C}[z+\frac{1}{qz}]$ or $\Lambda(z)=\Lambda(1/qz)$.

For the left hand side of \eqref{eq:subtracting} we get
\[
s(qz)\wedge \left(A(z)+\det A \left(\frac{1}{qz}\right)A\left(\frac{1}{qz}\right)^{-1}\right)s(z)
=0.
\]
Again, we can exploit a genericity argument here. Indeed, if we want the above equation to vanish for an arbitrary section $s(z)$ for generic values of $q$ the operator inside the parentheses must act trivially so we get
\begin{equation}
-\det A \left(\frac{1}{qz}\right)A\left(\frac{1}{qz}\right)^{-1}=A\left(z\right).
\end{equation} 
This completes the proof of the proposition.
\end{proof}

Thus the refection-invariant condition $s(1/z)=s(z)$ can be stated at the level of the $q$-connection as \eqref{eq:qConnReflection}.

\vskip.1in
The above proposition implies in particular that the function $\Lambda(z)$ has the following form
\begin{equation}\label{eq:LambdaNew}
    \Lambda(z)=\gamma\prod_{i=1}^n(z-a_i)\left(\frac{1}{qz}-a_i\right)\,,
\end{equation}
where $\gamma$ is some nonzero constant. The condition \eqref{eq:qConnReflection} takes the following form
\begin{equation}
\label{eq:AreflectionCompts}
   \xi_1\left(\frac{1}{qz}\right)=-\xi_2(z),\quad \xi_2\left(\frac{1}{qz}\right)=-\xi_1(z)\,.
\end{equation}

Notice that $Q_+(z)$ in \eqref{eq:QSymDef} is invariant under $T: z\mapsto \frac{1}{z}$ whereas $\Lambda(z)$ is invariant under $M_q\circ T:z\mapsto \frac{1}{qz}$.
In other words, $Q_+(z)\in\mathbb{C}[z+\frac{1}{z}]$ whereas $\Lambda(z)\in\mathbb{C}[z+\frac{1}{qz}]$.

\subsection{Nondegeneracy}
Since each Laurent polynomial from $\mathbb{C}[z+\frac{1}{z}]$ can be factored as $C\prod_i(z-t^{+}_i)(1/z-t^{-}_i)$, where $C$ is a constant. We assume that no two zeroes of any of the functions that appear in the above theorem belong to the same integral $q$-lattice: $q^\mathbb{Z}t^{+}_i\cap q^\mathbb{Z}t^{-}_j=q^\mathbb{Z}(t^{+}_i)^{\pm 1}\cap q^\mathbb{Z}(t^{+}_j)^{\pm 1}=\varnothing$.
Notice that each such Laurent polynomial can be written as a Taylor polynomial $z^{-l}C\prod_i(z-t^{+}_i)(1-t^{-}_iz)$ divided by $z^l$ for some positive $l$.  

We shall refer to the $QQ$-system of the form \eqref{eq:LambdaMatching11} in the presence of the above nodegeneracy conditions for Laurent polynomials $Q_\pm(z)$ and $\Lambda(z)$ as well as rational functions $\xi_{1,2}(z)$ as nondegenerate $QQ$-system.

\subsection{Bethe Ansatz Equations for Generalized $Z$-Twisted $(GL(2),q)$-Opers}
Using the above nondegeneracy conditions we can formulate the following

\begin{Thm}
Let $Q_+(z)=\prod_{i=1}^k(z-s_i)\left(\frac{1}{z}-s_i\right)$. Then there is a one-to-one correspondence between the set of nondegenerate generalized $Z$-twisted reflection-invariant Miura $(GL(2),q)$-opers and the set of nondegenerate solutions of the following generalized $\mathfrak{gl}_2$ XXZ Bethe equations
\begin{equation}\label{bethe from VW}
-\frac{\xi_1(s_{i})}{\xi_2(q^{-1}s_{i})}\frac{Q_{+}(qs_{i})}{Q_{+}(q^{-1}s_{i})}\frac{\Lambda(q^{-1}s_{i})}{\Lambda(s_{i})}=1\,, \quad i=1,\dots,k
\end{equation}
\end{Thm}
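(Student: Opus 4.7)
The strategy is to bootstrap off the Proposition which already establishes a bijection between generalized $Z$-twisted Miura $(GL(2),q)$-opers and solutions of the generalized $QQ$-system \eqref{eq:LambdaMatching11}. Under that bijection, the theorem reduces to showing that, in the presence of the nondegeneracy and reflection-invariance hypotheses, the $QQ$-system
\begin{equation*}
\xi_2(z)Q_+(z)Q_-(qz)-\xi_1(z)Q_-(z)Q_+(qz)=\Lambda(z)
\end{equation*}
is equivalent to the Bethe equations \eqref{bethe from VW} on the roots $\{s_i\}$ of $Q_+$.

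\textbf{Forward direction.} First I would substitute the zeros of $Q_+$ into the $QQ$-system. Evaluating at $z=s_i$ makes the first term vanish and gives
\begin{equation*}
-\xi_1(s_i)\,Q_-(s_i)\,Q_+(qs_i)=\Lambda(s_i),
\end{equation*}
while evaluating at $z=q^{-1}s_i$ makes the second term vanish and gives
\begin{equation*}
\xi_2(q^{-1}s_i)\,Q_+(q^{-1}s_i)\,Q_-(s_i)=\Lambda(q^{-1}s_i).
\end{equation*}
The nondegeneracy assumption — that the roots of $Q_+$ and $Q_-$ lie on disjoint $q$-lattices — ensures $Q_-(s_i)\neq 0$, so dividing these two equations cancels $Q_-(s_i)$ and produces exactly \eqref{bethe from VW}. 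Because $Q_+$ is $T$-symmetric its roots come in pairs $\{s_i,1/s_i\}$; I would then verify, using the reflection identities \eqref{eq:LambdaRefl} and \eqref{eq:AreflectionCompts}, that the Bethe equation at $1/s_i$ is equivalent to the one at $s_i$, so that the $k$ equations listed in the theorem suffice.

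\textbf{Reverse direction.} Conversely, given Bethe roots satisfying \eqref{bethe from VW}, I would define $Q_+(z)$ by \eqref{eq:QSymDef} and then determine $Q_-(z)$ by prescribing its values at the $2k$ zeros of $Q_+$ via $Q_-(s_i)=-\Lambda(s_i)/[\xi_1(s_i)Q_+(qs_i)]$ together with its reflected counterpart at $1/s_i$. A Lagrange-type interpolation in the ring $\mathbb{C}[z+z^{-1}]$ then produces a unique symmetric Laurent polynomial $Q_-$ of the expected degree. The Bethe equations are exactly the compatibility conditions ensuring that the value prescribed at $s_i$ (coming from $z=s_i$) agrees with the value forced by evaluating the $QQ$-system at $z=q^{-1}s_i$. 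Finally, since both sides of \eqref{eq:LambdaMatching11} are Laurent polynomials whose degrees are controlled by the data $(\Lambda,\xi_{1,2},Q_+)$, agreement at sufficiently many sample points (the roots of $Q_\pm$ on both sides of the base change $z\leftrightarrow qz$) forces the identity to hold as Laurent polynomials.

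\textbf{Main obstacle.} The delicate point is the reverse direction: one must ensure that the interpolated $Q_-(z)$ is genuinely a symmetric Laurent polynomial of the correct degree (so that the section $s(z)=(Q_-(z),Q_+(z))^T$ is truly $T$-invariant and produces a reflection-invariant Miura $q$-oper), and that the twists $\xi_{1,2}(z)$ then obey \eqref{eq:AreflectionCompts}. This amounts to careful degree accounting of $\Lambda(z)$ in the factored form \eqref{eq:LambdaNew} against the degrees of $Q_\pm$ and $\xi_{1,2}$; the nondegeneracy hypothesis is what prevents spurious cancellations and ensures the polynomial identity is fully captured by the finitely many Bethe constraints.
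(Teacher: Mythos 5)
Your proposal is correct and follows essentially the same route as the paper, which itself only records the key step (evaluate both sides of the $QQ$-relation \eqref{eq:LambdaMatching11} at $z=s_i$ and at $z=q^{-1}s_i$, then divide, using nondegeneracy to cancel $Q_-(s_i)\neq 0$) and defers the rest to the analogous Theorem 3.6 of \cite{KSZ}. Your reverse direction via interpolation of $Q_-$ at the roots of $Q_+$, with the Bethe equations as the compatibility conditions and a degree count to promote pointwise agreement to a Laurent-polynomial identity, is the standard argument behind that cited theorem, and your observation that the equations at $s_i^{-1}$ are equivalent to those at $s_i$ matches the paper's remark on space-reflection symmetry.
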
 

\begin{Rem}
    Notice that without imposing the reflection invariance condition $Q_\pm(z)$ and $\Lambda(z)$ can be considered as arbitrary not necessarily symmetric Laurent polynomials in $\mathbb{C}[z,z^{-1}]$ and the above theorem would also hold.
\end{Rem}

The statement is proved analogously in the presence of the above nondegeneracy conditions to Theorem 3.6 of \cite{KSZ} where one evaluates both sides of \eqref{eq:LambdaMatching11} at $z=s_i$. 

Note that instead of evaluating the $QQ$-equation at $s_i$s one can do so at $s_i^{-1}$. This procedure will result in dual Bethe equations which are equivalent to \eqref{bethe from VW}. This is the manifestation of the so-called \textit{space-reflection} symmetry \cite{sklyaninboundary}. In the right picture of \figref{fig:foldedchain} this choice would correspond to composing intertwiners of $V_i(a_i)$ with $V_{i-1}(a_{i-1}), V_{i-2}(a_{i-2}),\dots$ instead and reflecting off of the left boundary first.

\subsection{Singularities and Asymptotics of the $q$-Connection.}
Bethe equations \eqref{bethe from VW} describe the space of nondegenerate reflection-invariant $(GL(2),q)$-opers. At the moment any rational functions $\xi_{1,2}(z)$ that satisfy \eqref{eq:AreflectionCompts} are allowed. Next we will impose asymptotic conditions at $0$ and $\infty$ on the $q$-connection $A(z)$ in this diagonal gauge which will further restrict $\xi_{1,2}(z)$.  

\subsubsection{Interlude: Differential Connections}
The choice of these asymptotic conditions is motivated by similar analysis for differential connections $a(z)\in \text{End}(E)\otimes K$, where $K$ is the canonical bundle. For a differential connection, one typically assumes mild (tame) singularity at $z=0$ so that $a(z)=a_{-1}/z+a_0+O(z)$, where $a_{-1},a_0\in\mathfrak{g}$ for some Lie algebra $\mathfrak{g}$, i.e. $\mathfrak{gl}_2$. Then, if we also know the asymptotic behavior of the connection at infinity, then by Riemann-Roch theorem we can obtain some information about zeros and poles in $\mathbb{P}^1\backslash\{0,\infty\}$.

Let us first consider such a prototypical example. Assume that $a(z)=a_0dz+O(z)$ as $z\to 0$ and that in a local trivialization $a(z)=\begin{pmatrix}
    x_1(z) &0\\
    0& x_2(z)
\end{pmatrix}dz$
for some rational functions $x_{1,2}(z)$ satisfying a similar condition to \eqref{eq:AreflectionCompts}, i.e. $x_1(z^{-1})=-x_2(z)$. In this case, we get $a(z)=a'_0dz+O(z^{-1})$ as $z\to \infty$. However, in the punctured neighborhood of $\infty\in\mathbb{P}^1$ the local coordinate is $w=\frac{1}{z}$ thus $dz = -\frac{dw}{w^2}$ so the connection has a double pole. Since the degree of any canonical divisor on $\mathbb{P}^1$ should be equal to $-2$ we are allowed to add equal number of zeros and poles. For instance, 
\begin{equation}\label{eq:case1}
    x_1(z)=\frac{(z-a)(z-b)}{(z-c)(z-d)}
\end{equation}
for nonzero complex $a,b,c$, and $d$ will work.

On the other hand, if we assume that the connection has a simple pole at the origin $a(z)=\frac{a_{-1}}{z}dz+a_0dz+O(z)$ and by the reflection condition $x_1(z^{-1})=-x_2(z)$ it reads $a(z)=a'_{1}zdz+a'_0dz+O(z^{-1})$ as $z\to\infty$. After transforming to the local coordinate $w$ at infinity we see that $a(z)$ has a triple pole. In order to satisfy the Riemann-Roch theorem we therefore need to add two simple zeros. In other words,
\begin{equation}\label{eq:case2}
    x_1(z)=\frac{(z-f)(z-h)}{z}
\end{equation}
for nonzero complex $f$ and $h$ will be sufficient. Note also that the product of the rational functions in \eqref{eq:case1} and \eqref{eq:case2} also satisfies the latter asymptotic conditions.

\subsubsection{Constant Asymptotics of $A(z)$}\label{VWasymp}
Now let us turn back to our $q$-oper connection $A(z)$. First, let us assume that it asymptotes to a constant matrix which we can separate into the $GL(1)$ and $SL(2)$ parts as $A(z)=b\cdot a_0^{\frac{\check{\alpha}}{2}}$ as $z\to 0$ where $\check{\alpha}=\text{diag}(1,-1)$ is the Cartan Chevalley generator of $\mathfrak{sl}_2$. From the reflection conditions \eqref{eq:AreflectionCompts} we then get that $b=-a_0^{1/2}$ and
\begin{equation}
    A(z)\sim - a_0^{1/2}\begin{pmatrix}
        a_0^{1/2}&0\\
        0&a_0^{-1/2}
    \end{pmatrix},\quad z\to 0,\qquad
        A(z)\sim a_0^{1/2}\begin{pmatrix}
        a_0^{-1/2}&0\\
        0&a_0^{1/2}
    \end{pmatrix},\quad z\to \infty,
\end{equation}
Next, the $q$-connection is expected to have poles at $z=\pm q^{-1/2}$ which are the two fixed points of the involution $z\mapsto \frac{1}{qz}$. In order to maintain the above asymptotics one needs to add two more zeros $\mu$ and $\tilde\mu$ (cf. \eqref{eq:case1}). 

Ultimately we arrive at the following explicit formulae for the components of $Z(z)$
\begin{equation}\label{eq:Aconndiagbc}
\xi_1(z)=q\frac{(z-\mu)(z-\tilde\mu)}{qz^{2}-1}, \quad \xi_2(z)=\frac{(q\mu z-1)(q\tilde \mu z-1)}{qz^{2}-1}.
\end{equation}
Here $a_0=q\mu\tilde{\mu}$.

\subsubsection{Simple Pole at Zero of $A(z)$}\label{simple pole at zero}
Assume now that 
\begin{equation}
    A(z)=-\frac{c}{z}d_{-1}^{\frac{\check{\alpha}}{2}} + a_0 + O(z)
\end{equation}
as $z\to 0$. We want to incorporate the constant asymptotics from \eqref{eq:Aconndiagbc} into this formula as well. 
From \eqref{eq:AreflectionCompts} we get that $c^2 = d_{-1}$ and 
\begin{equation}
    A(z)=d_{-1}^{1/2}\cdot d_{-1}^{-\frac{\check{\alpha}}{2}}\,z + a'_0 + O(z)
\end{equation}
as $z\to \infty$. This results in the following expressions for $\xi_{1,2}(z)$ (cf. \eqref{eq:case2})
\begin{align}\label{eq:xi1full}
\xi_1(z)&=q^2\frac{(z-\mu)(z-\tilde{\mu})}{qz^{2}-1}\cdot\frac{(z+b)(z+\tilde{b})}{z}\,,\cr
\xi_2(z)&=\frac{(q z\mu-1)(q z\tilde{\mu}-1)}{qz^{2}-1}\cdot\frac{(qzb+1)(qz\tilde{b}+1)}{z}\,.
\end{align}
Similarly to \eqref{eq:Aconndiagbc} in this calculation and $d_{-1}=q\mu\tilde{\mu}b\tilde{b}$.

\subsection{Bethe Ansatz Equations}
We are now ready to write down Bethe Ansatz equations \eqref{bethe from VW}  for generalized $Z$-twisted Miura $(GL(2),q)$-opers explicitly in the presence of the asymptotic conditions \eqref{eq:Aconndiagbc} and \eqref{eq:xi1full}. 

\subsubsection{Constant Asymptotics of $A(z)$}
Let $Q_+(z)$ be as in \eqref{eq:QSymDef}, $\Lambda(z)$ as in \eqref{eq:LambdaNew} and the $A(z)$ is given by components from \eqref{eq:Aconndiagbc}. Then the Bethe equations  take the following explicit form
\begin{equation}\label{eq:Bethesl2diag}
    q\frac{s_i^2-q }{q s_i^2-1}\frac{\left(s_i-\mu\right)
   \left(s_i-\tilde \mu\right)}{\left(\mu s_i-1\right) \left(\tilde \mu  s_i-1\right)}\prod_{j=1}^k\frac{\left(q
   s_i-s_j\right) \left(q s_i s_j-1\right)}{\left(s_i-q s_j\right) \left(s_i
   s_j-q\right)}\prod_{l=1}^n\frac{\left(a_l s_i-1\right) \left(a_l
   q-s_i\right)}{\left(a_l-s_i\right) \left(a_l q s_i-1\right)}=-1\,,
\end{equation}
where $i=1,\dots,k$.

\subsubsection{Simple Pole at Zero of $A(z)$}
For the asymptotics from \eqref{eq:xi1full} we get the following Bethe equations
\begin{align}\label{eq:Bethesl2full}
    & \frac{s_i^2-q }{q s_i^2-1}\frac{\left(s_i-\mu\right)
   \left(s_i-\tilde \mu\right)}{\left(\mu s_i-1\right) \left(\tilde \mu  s_i-1\right)}\frac{\left(s_i+b\right)
   (s_i+\tilde b)}{\left(b s_i+1\right) (\tilde b  s_i+1)}\cr
   &\cdot\prod_{j=1}^k\frac{\left(q
   s_i-s_j\right) \left(q s_i s_j-1\right)}{\left(s_i-q s_j\right) \left(s_i
   s_j-q\right)}\prod_{l=1}^n\frac{\left(a_l s_i-1\right) \left(a_l
   q-s_i\right)}{\left(a_l-s_i\right) \left(a_l q s_i-1\right)}=-1\,,
\end{align}
where $i=1,\dots,k$.

\subsubsection{Bethe Ansatz for \texorpdfstring{$U_q(\hat{\mathfrak{sl}}_2)$}{U_q(sl_2)} Open Spin Chains}
Bethe equations \eqref{eq:Bethesl2diag} first appeared in the seminal work of Sklyanin \cites{sklyaninboundary} where he used the quantum inverse scattering method. Later in \cites{Vlaar:2020jww} the same equations were reproduced from the representation theory of quantum affine algebra $U_q(\hat{\mathfrak{sl}}_2)$. These equations describe the XXZ spin chain on $n$ sites in the sector with $k$ excitations with diagonal boundary conditions determined by parameters $\mu,\tilde\mu$. The Hilbert space of the corresponding quantum mechanical problem is the evaluation tensor module $\mathbb{C}^2(a_1)\otimes\dots\otimes\mathbb{C}^2(a_n)$ where $\mathbb{C}^2$ is the fundamental representation of $\mathfrak{sl}_2$. The diagonal boundary conditions appear in a particular solution of the reflection equation.

In our geometric setting, \eqref{eq:Bethesl2diag} are realized from $q$-connections that approach a constant at $z=0$.

\vskip.1in

Yang, Nepomechie, and Zhang \cite{Yang:2005ce} derived equations \eqref{eq:Bethesl2full} using the most general solution of the reflection equation which involves one additional parameter for the left and right boundary of the spin chain $b$ and $\tilde b$. We provide the exact match of notations in \secref{Sec:Connect}.

Notice that this is not a direct limit like $b,\tilde b\to 0$ under which \eqref{eq:Bethesl2full} turns into \eqref{eq:Bethesl2diag}. One needs to additionally scale variables $\mu$ and $\tilde{\mu}$  to make it work.

\vskip.1in
In our geometric setting, \eqref{eq:Bethesl2full} are realized from $q$-connections which have a simple pole at $z=0$. Therefore, generalized $Z$-twisted reflection-invariant $(GL(2),q)$-opers whose connections have at most \textit{tame} singularities are described by $\mathfrak{sl}_2$ XXZ Bethe equations with generic open boundary conditions.

\subsection{Attenuation}
In the above construction we had $a_0=q\mu\tilde\mu$ for the constant asymptotic conditions of $A(z)$. In other words, the $q$-connection approaches diagonal matrix $Z=\text{diag}(1,a_0)$ at infinity. In other words, this limit reproduces the $q$-connection behavior for closed chains. This phenomenon is called \textit{attenuation} in the spin chain literature \cite{lukyanenko2014boundaries}.

\section{Reflection-invariant \texorpdfstring{$(GL(N),q)$}{(GL(N),q)}-Opers}\label{ref inv gln opers}
Now let us study the higher rank setup.

\subsection{\texorpdfstring{$(GL(N),q)$}{(GL(N),q)}-Opers}
Let $q\in\mathbb{C}^\times$. Consider the holomorphic vector bundle $E$ of rank $N$ over $\mathbb{P}^1$ that is equivariant with respect to the scaling of the base $M_q:\mathbb{P}^{1}\to \mathbb{P}^{1}$ that sends $z\mapsto qz$. Let $E^q$ denote the pullback of $E$ under $M_q$.

\begin{Def}\label{Def:qOper}
    A meromorphic $(GL(N),q)$-oper on $\mathbb{P}^1$ is a triple $(E,A,\mathcal{L}_\bullet)$ where for an open Zariski dense subset $U\subset\mathbb{P}^1$ and $V=U\cap M_q^{-1}(U)$ the $q$-connection $A\in\text{Hom}_{\mathcal{O}(V)}(E,E^q)$ satisfies the following conditions on the complete flag $\mathcal{L}_\bullet$ of subbundles $\mathcal{L}_{1}\subset \dots\subset\mathcal{L}_{N-1}\subset E$

    i) $A\cdot \mathcal{L}_i\subset \mathcal{L}_{i+1}$

    ii) The induced maps $\bar{A_i}:\mathcal{L}_{i+1}/\mathcal{L}_{i}\to \mathcal{L}_{i}/\mathcal{L}_{i-1}$ are isomorphisms.
\end{Def}

Changing the trivialization of $E$ via $g(z)\in GL(N)(z)$ is given by the following $q$-gauge transformation
\begin{equation}
    A(z)\mapsto g(qz)A(z)g(z)^{-1} \,.
\end{equation}

\begin{Def}
    We say that the oper is \textit{generalized $Z$-twisted} if there is a gauge change $g(z)\in GL(N)(z)$ such that $g(qz)A(z)g(z)^{-1}=Z(z)\in H(z)$ 
\end{Def}

\begin{Rem}
    At this point, we observe a deviation from \cite{Frenkel:2020} and previous work on $q$-opers and Bethe Ansatz. Indeed, a generalized $Z$-twisted $q$-connection can be diagonalized into a connection that takes values in loops valued in $H$, not just in constant loops.
\end{Rem}

\begin{Def}
    A Miura $(GL(N),q)$-oper is a quadruple $(E,A,\mathcal{L}_\bullet,\hat{\mathcal{L}}_\bullet)$ where $(E,A,\mathcal{L}_\bullet)$ is a $(GL(N),q)$-oper and the complete flag $\hat{\mathcal{L}}_\bullet$ of subbundles is preserved by the $q$-connection.
\end{Def}

\begin{Def}
The Miura $(GL(N),q)$-oper is called \textit{generalized $Z$-twisted} if there is a $q$-gauge transformation $v(z)\in B_+(z)$ such that
\begin{equation}\label{eq:MiuraGenN}
A(z)=v(qz)Z(z)v(z)^{-1}
\end{equation}
where $Z(z)\in H(z)$.
\end{Def}

Let as denote $Z(z)=\text{diag}(\xi_1,\dots, \xi_N)(z)$ where $\xi_i(z)\in\mathbb{C}(z)$.

The following statement generalizes a similar result for $Z$-twisted $q$-opers from earlier work.
\begin{Prop}
There are exactly $N!$ Miura $(GL(N),q)$-opers for a given generalized $Z$-twisted $(GL(N),q)$-oper provided that $Z(z)$ is regular semisimple.
\end{Prop}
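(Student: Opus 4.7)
The plan is to pass to a gauge in which $A(z)$ becomes the diagonal $q$-connection $Z(z)$, and then to classify the complete flags in the trivial rank-$N$ bundle that are preserved by $Z(z)$. Concretely, fix a gauge transformation $g(z)\in GL(N)(z)$ with $g(qz)A(z)g(z)^{-1}=Z(z)=\diag(\xi_1(z),\ldots,\xi_N(z))$; then a complete flag $\hat{\mathcal{L}}_\bullet$ on $E$ is preserved by $A$ if and only if its image under $g$ in the trivial bundle is preserved by $Z(z)$ in the sense $Z(z)\hat{\mathcal{L}}_i(z)\subset \hat{\mathcal{L}}_i(qz)$. The ambiguity in $g$ is the centralizer of $Z$ under $q$-shift conjugation, which under regular semisimplicity equals $H(z)$, and $H(z)$ preserves each coordinate flag as a set. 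Counting Miura structures on $(E,A,\mathcal{L}_\bullet)$ therefore reduces to counting $Z(z)$-preserved complete flags on $\mathcal{O}^{\oplus N}$.

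The core step is to classify $Z(z)$-preserved lines. A line $L=\Span(v(z))$ is preserved precisely when $Z(z)v(z)=\mu(z)v(qz)$ for some scalar $\mu(z)\in\mathbb{C}(z)$, which in components reads $\xi_i(z)v_i(z)=\mu(z)v_i(qz)$. If two components $v_i$ and $v_j$ of $v$ are simultaneously nonzero, dividing these relations yields
\[
\frac{\xi_i(z)}{\xi_j(z)}=\frac{w(qz)}{w(z)},\qquad w:=v_i/v_j\in\mathbb{C}(z)^\times.
\]
The regular-semisimple hypothesis on $Z(z)$ is precisely what rules out such a relation for $i\neq j$, so $v(z)$ must be a scalar multiple of a single coordinate vector $e_{\sigma(1)}$. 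This yields exactly $N$ choices for $\hat{\mathcal{L}}_1$.

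With $\hat{\mathcal{L}}_1=\Span(e_{\sigma(1)})$ fixed, I would conclude by induction on $N$: the quotient $E/\hat{\mathcal{L}}_1$ inherits a diagonal $q$-connection with entries $\{\xi_j:j\neq\sigma(1)\}$, still regular semisimple, so by the inductive hypothesis there are $(N-1)!$ preserved complete flags on the quotient, each lifting uniquely to an extension of $\hat{\mathcal{L}}_1$ and realised by a coordinate flag refining $\hat{\mathcal{L}}_1$. Multiplying gives $N\cdot(N-1)!=N!$ preserved complete flags, hence $N!$ Miura structures, matching the bijection with $S_N$ via $\sigma$. The main obstacle, and the only step where I anticipate real technical subtlety, is pinning down the exact meaning of \emph{regular semisimple} so that the displayed $q$-difference equation $\xi_i/\xi_j=w(qz)/w(z)$ has no solution $w\in\mathbb{C}(z)^\times$ for $i\neq j$; this condition is strictly stronger than the pointwise distinctness $\xi_1\neq\xi_2$ stated for $GL(2)$ earlier, and should be taken as the natural $q$-difference analog of the statement that the centralizer of $Z$ in $GL(N)(z)$ coincides with the Cartan loop group $H(z)$.
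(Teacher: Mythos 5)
The paper offers no proof of this Proposition --- it simply defers to the analogous count for constant-$Z$-twisted $q$-opers in earlier work, where $N!=|W|$ appears as the number of fixed points of a regular semisimple torus element acting on the flag variety. Your argument is the elementary type-$A$ substitute: classify the preserved line subbundles directly and induct on the quotient. It is sound in outline and yields the same bijection with $S_N$ without invoking the flag-variety fixed-point/Bruhat picture, at the price of not generalizing verbatim beyond $GL(N)$.

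The substantive issue is exactly the one you flag yourself, and it deserves to be promoted from ``technical subtlety'' to a genuine correction of the hypothesis. With the definition of regular semisimple actually written in the paper (pointwise distinctness of the $\xi_i(z)$), the statement is false, and your computation of preserved lines locates why: for $N=2$ and $Z=\diag(q,1)$ the diagonal entries are everywhere distinct, yet $Z(z)\bigl(az,1\bigr)^{T}=\bigl(aqz,1\bigr)^{T}$, so $\Span\bigl((az,1)^{T}\bigr)$ is preserved for every $a\in\mathbb{C}^{\times}$ and there are infinitely many Miura structures. Your proof closes precisely under the stronger hypothesis you isolate, namely that $\xi_i(z)/\xi_j(z)=w(qz)/w(z)$ has no solution $w\in\mathbb{C}(z)^{\times}$ for $i\neq j$ (for constant $Z=\diag(\zeta_1,\dots,\zeta_N)$ this is the familiar condition $\zeta_i/\zeta_j\notin q^{\mathbb{Z}}$), and that is the condition under which the count is exactly $N!$. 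Two minor points: the $q$-shift centralizer of such a $Z$ is the constant torus $H$ rather than $H(z)$, since $h_i(qz)=h_i(z)$ forces $h_i$ constant when $q$ is not a root of unity --- this does not affect your count; and ``exactly $N!$'' also tacitly requires each coordinate flag to be in generic relative position with the oper flag $\mathcal{L}_\bullet$, which neither you nor the paper verifies.
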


At certain loci on the base of $E$ this condition will fail and it  will become important for the construction to follow. In particular, $\xi_i(z)$ may have poles and zeros.

\begin{Def}
    We say that a (Miura) $(GL(N),q)$-oper has regular singularities defined by a collection of Laurent polynomials $\Lambda_i(z)$ for $i=1,\dots, N-1$ if the induced maps $\bar{A}_i$ in the the Definition \ref{Def:qOper} are isomorphisms away from the roots of these polynomials.
\end{Def}

\begin{Rem}
    Another deviation from \cite{Frenkel:2020} -- all components of sections of subbundles of $E$ as well as $\Lambda_i(z)$ need to be considered as Laurent polynomials. In what follows, unless specified, we will be dealing with Laurent polynomials.
\end{Rem}

The generalized $Z$-twisted condition of the Miura $q$-oper implies that in the gauge when $A$ is equivalent to $Z(z)\in H(z)$ the flag $\hat{\mathcal{L}}_\bullet$ is formed by the standard basis $e_1, \dots, e_{N}$. The relative position between flags $\hat{\mathcal{L}}$ and $\hat{\mathcal{L}}_\bullet$ is generic on $V\subset\mathbb{P}^1$. 

Let $s(z)$ to be the local section that generates $\mathcal{L}_1$ according to the Definition \ref{Def:qOper}. The regular singularity condition implies that $q$-Wronskians, namely the determinants
\begin{align}\label{qD}
&\mathcal{D}_k(s)=e_1\wedge\dots\wedge{e_{N-k}}\\
&\wedge s(q^{k-1}z)\wedge Z(q^{k-2}z) s(q^{k-2}z)\wedge Z(q^{k-2}z)Z(q^{k-3}z) s(q^{k-3}z)\wedge\dots\wedge Z(q^{k-2}z)\cdots Z(z)s(z)\,\notag
\end{align}
have a subset of zeroes, which come from the following Laurent polynomials 
\begin{eqnarray}\label{eq:WPDefs}
W_k(s)=P_1(z) \cdot P_2(q^2z)\cdots P_{k}(q^{k-1}z),  \qquad 
P_k(z)=\Lambda_{N-1}\Lambda_{N-2}\cdots\Lambda_{N-k}(z)\,.
\end{eqnarray}  

Another subset of zeros of $\mathcal{D}_k(s)$ corresponds to points on the base where the two flags are not in a generic relative position, i.e., when one of the vectors $Z^{k-j}s(q^{j-1})$ becomes parallel to a vector from the standard basis. Therefore, the Miura condition is equivalent to the existence of nonzero constants $\alpha_k$ and Laurent polynomials $\mathcal{V}_k(z)$
for which the following equations hold
\begin{equation}\label{eq:Dminor}
    \mathcal{D}_k(s)= \alpha_{k} W_{k} \mathcal{V}_{k}(s)\,,
\end{equation}
In components, the above functions can be represented as determinants of the matrix $M_{N-k+1,\dots, N}$
\begin{equation}\label{eq:MijMatrix}
    \left(M_{N-k+1,\dots, N}\right)_{i,j}=\xi_{N-k+i}(q^{k-1}z)\cdots \xi_{N-k+i}(q^{k-j}z)\cdot s_{N-k+i}(q^{k-j}z)\,,\quad i,j=1,\dots,k
\end{equation}
The boundary conditions for \eqref{eq:Dminor} are 
\begin{equation}\label{eq:cDGeneric}
    \mathcal{D}_{N}(s)=W_{N}(s)\,,\qquad  \cV_{N}(s)=1\,, \qquad \mathcal{D}_0=e_1\wedge\dots\wedge e_{N}=1\,.
\end{equation}

However, for the purposes of this exposition, it will be sufficient to keep only $\Lambda_N(z)=\Lambda(z)$ nontrivial whereas $\Lambda_i(z)=1$ for $i=1,\dots, N-1$. It means that \eqref{eq:Dminor} can be viewed as
\begin{equation}
    \mathcal{D}_{k}(s)(z)=\mathcal{V}_{k}(s)(z)\,, \quad k=1,\dots,N-1\,, \qquad \mathcal{V}_{N}(s)(z)=\Lambda(z)\,.
\end{equation}
Generalizations for the complete set of singularities are straightforward.
\begin{Rem}\label{determinant}
The first determinant $\mathcal{D}_1(s)(z)=e_1\wedge\cdots\wedge e_{N-1}\wedge s(z)$ is then equal to $\mathcal{V}_1(z)$, which will later be identified with $Q_1^+(z)$, if we put $\alpha_1=1$.
\end{Rem}

\subsubsection*{Example: \texorpdfstring{$(GL(4),q)$}{(GL(4),q)}-Oper}
For the $(GL(4),q)$-oper the matrix $M_{1,2,3,4}$ reads
$$
\left(
\begin{array}{cccc}
 s_1\left(q^3 z\right) & s_1\left(q^2 z\right) \xi _1\left(q^2 z\right) & s_1(q z) \xi _1(q
   z) \xi _1\left(q^2 z\right) & s_1(z) \xi _1(z) \xi _1(q z) \xi _1\left(q^2 z\right) \\
 s_2\left(q^3 z\right) & s_2\left(q^2 z\right) \xi _2\left(q^2 z\right) & s_2(q z) \xi _2(q
   z) \xi _2\left(q^2 z\right) & s_2(z) \xi _2(z) \xi _2(q z) \xi _2\left(q^2 z\right) \\
 s_3\left(q^3 z\right) & s_3\left(q^2 z\right) \xi _3\left(q^2 z\right) & s_3(q z) \xi _3(q
   z) \xi _3\left(q^2 z\right) & s_3(z) \xi _3(z) \xi _3(q z) \xi _3\left(q^2 z\right) \\
 s_4\left(q^3 z\right) & s_4\left(q^2 z\right) \xi _4\left(q^2 z\right) & s_4(q z) \xi _4(q
   z) \xi _4\left(q^2 z\right) & s_4(z) \xi _4(z) \xi _4(q z) \xi_4\left(q^2 z\right) \\
\end{array}
\right)
$$
Notice that all columns are sections of $E^{q^4}$. One then has $\mathcal{D}_4(s)=\det M_{1,2,3,4}(s)$. At the same time, matrix $M_{3,4}$ for $(GL(4),q)$-opers read

\begin{equation}
    \left(
\begin{array}{cc}
 s_3(q z) & s_3(z) \xi _3(z) \\
 s_4(q z) & s_4(z) \xi _4(z) \\
\end{array}
\right)
\end{equation}
whose determinant is equal to $\mathcal{V}_2(z)$ (later to be identified with $Q_2^+(z)$).

\subsection{The \texorpdfstring{$QQ$}{QQ}-System}
As it was shown in \cites{KSZ,Koroteev:2020tv} all $Q^\pm$ functions can be understood as certain (generalized) minors of the matrix $M_{1,\dots, N}$. In the present setup we need to update that construction to iterated products of $q$-shifted twist elements. 

The minors in question obey the famous combinatorial Jacobi/Lewis Carroll identity which for our matrices $M$ can be written as
\begin{equation}\label{eq:LewisCarroll}
    \det M^1_1 \det M^2_N - \det M^1_N \det M^2_1 = \det M^{1,2}_{1,N} \det M\,,
\end{equation}
where $M^a_b$ denotes a submatrix of $M$ which is obtained from $M$ by removing $a$-th row and $b$-th column. 

We can see that if remove the first row and the first column of $M_{N-k+1,\dots, N}$ we will get the matrix $M_{N-k+2,\dots, N}$ whose $i$th row is multiplied by $\xi_i(q^{k-2} z)$. Similar findings can be made about other submatrices from \eqref{eq:LewisCarroll}. 

Let us denote
\begin{equation}
    Q^+_k(z)=\det M_{N-k+1,\dots, N}(z)\,,\qquad  Q^-_k(z)=\det M_{N-k,N-k+2\dots, N}(z)\,.
\end{equation}
It is easy to see that
\begin{equation}
    (M_{N-k,\dots, N})^1_1(z)=\xi_{N-k+1}\cdots \xi_{N}(q^{k-2})\cdot Q^+_k(z)\,,\qquad (M_{N-k,\dots, N})^2_{k+1}(z)=Q^-_k(qz)\,.
\end{equation}
as well as
\begin{equation}
    (M_{N-k,\dots, N})^1_{k+1}(z)=Q^+_k(qz)\,,\qquad (M_{N-k,\dots, N})^2_1(z)=\xi_{N-k}\cdot \xi_{N-k+2}\cdots \xi_{N}(q^{k-2}z)\cdot Q^-_k(z)\,.
\end{equation}
At the same time
\begin{equation}
   (M_{N-k,\dots, N})^{12}_{1\,k+1}(z)=\xi_{N-k+2}\cdots \xi_{N}(q^{k-2}z)\cdot Q^+_{k-1}(qz)\,.
\end{equation}

This brings us to the following
\begin{Thm}\label{Th:QQinj}
  There is a one-to-one correspondence between the set of
  nondegenerate generalized $Z$-twisted Miura $(GL(N),q)$-opers and the set of nondegenerate Laurent polynomial solutions of the $QQ$-system, 
\begin{equation}\label{eq:QQAtype}
\xi_{N-k+1}(q^{k-1}z) Q^+_k(z) Q^-_k(qz) - \xi_{N-k}(q^{k-1}z) Q^+_k(qz) Q^-_k( z) =  Q^+_{k-1}(qz)Q^+_{k+1}(z)\,, 
\end{equation}
where $k = 1,\dots, N-1$ and $Q^+_{N}(z)=\Lambda(z)$.
\end{Thm}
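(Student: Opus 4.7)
The plan is to apply the Lewis Carroll (Jacobi) identity \eqref{eq:LewisCarroll} to the $(k+1)\times(k+1)$ matrix $M=M_{N-k,\dots,N}(z)$ for each $k=1,\dots,N-1$. The five submatrix determinants that appear in \eqref{eq:LewisCarroll} have already been identified in the paragraph preceding the theorem with $Q^{\pm}_k(z)$, $Q^{\pm}_k(qz)$, and $Q^+_{k-1}(qz)$, each multiplied by an explicit product of $q$-shifted twists $\xi_i$; meanwhile $\det M$ itself equals $Q^+_{k+1}(z)$ (up to a harmless normalization). Substituting these six dictionary entries into \eqref{eq:LewisCarroll} and dividing through by the common factor $\prod_{i=N-k+2}^{N}\xi_i(q^{k-2}z)$ causes the remaining $\xi$-prefactors to collapse to exactly the combination $\xi_{N-k+1}(q^{k-1}z)$ and $\xi_{N-k}(q^{k-1}z)$ appearing in \eqref{eq:QQAtype}. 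The endpoint cases $k=1$ and $k=N-1$ are handled via the boundary conventions $Q^+_0=1$ and $Q^+_N=\Lambda$ recorded in \eqref{eq:cDGeneric}.

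For the converse direction, I would start from a nondegenerate tuple of Laurent polynomial solutions $\{Q^\pm_k\}_{k=1}^{N-1}$ of \eqref{eq:QQAtype} and reverse-engineer the Miura oper. In the diagonal gauge \eqref{eq:MiuraGenN} the preserved flag $\hat{\mathcal{L}}_\bullet$ is the standard flag, and the components $s_j(z)$ of the generator of $\mathcal{L}_1$ can be read off from the minor identities \eqref{eq:Dminor} by inverting inductively in $k$. The nondegeneracy hypothesis---that the zeros of $Q^\pm_k$, $\Lambda$, and the $\xi_i$ do not collide on any common $q^{\mathbb{Z}}$-orbit---ensures that this inversion is unambiguous and yields Laurent polynomial components. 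Iterating the action of $A$ on $\mathcal{L}_1=\text{Span}(s)$ then reconstructs the full flag $\mathcal{L}_\bullet$, and the $QQ$-equations translate, via the same Lewis Carroll identity run in reverse, into the assertion that the induced maps $\bar A_i$ are isomorphisms on $V$, which is precisely the Miura $(GL(N),q)$-oper condition.

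The main obstacle, exactly as in the constant-twist analogues of \cite{KSZ,Frenkel:2020}, is the combinatorial bookkeeping: one has to verify that every $\xi_i(q^m z)$ produced by the Lewis Carroll substitution cancels correctly so that only the two advertised factors survive, and one must track the $q$-shifts of the arguments in $Q^\pm_k(z)$ versus $Q^\pm_k(qz)$ so that they agree with \eqref{eq:QQAtype}. The new feature here is that $Z(z)\in H(z)$ depends on the base coordinate, so the numerical prefactors of the original proof are replaced by their rational-function $q$-shifted analogues; a secondary point to check is that no zero or pole of $\xi_i(q^m z)$ contaminates the polynomial structure of the $Q^\pm_k$, which is guaranteed by the nondegeneracy conditions. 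With these two points dispatched, both the forward direction and the injective reconstruction follow from \eqref{eq:LewisCarroll}.
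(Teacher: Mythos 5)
Your proposal follows exactly the route the paper takes: the theorem is presented as an immediate consequence of the Lewis Carroll identity \eqref{eq:LewisCarroll} applied to $M_{N-k,\dots,N}$ together with the minor-to-$Q^{\pm}$ dictionary recorded in the paragraphs just before the statement, with the common $\xi$-prefactors cancelled and the boundary cases handled by \eqref{eq:cDGeneric}; your sketch of the converse via inverting \eqref{eq:Dminor} under the nondegeneracy hypothesis matches the reconstruction the paper delegates to the B\"acklund-type arguments of \cite{KSZ} and \cite{Frenkel:2020}. The only visible discrepancy (whether the surviving twist factors carry argument $q^{k-2}z$ or $q^{k-1}z$) is inherited from the paper's own displayed identifications rather than introduced by you, so the proposal is essentially the paper's proof.
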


Note that under this identification, the last component of the section vector $s(z)$ is $Q_1^+(z)$ whereas the penultimate component is $Q_1^-(z)$. The other components can be obtained from $Q_1^\pm(z)$ via so-called B\"acklund-type transformations \cite{Frenkel:2020,Koroteev:2020tv}.

\subsection{Nondegeneracy}
Each Laurent polynomial can be factored as $C\prod_i(z-t^{+}_i)\prod_j(1/z-t^{-}_j)$, where $C$ is a constant. We assume that no two zeroes of any of the functions that appear in the above theorem belong the the same integral $q$-lattice: $q^\mathbb{Z}t^{+}_i\cap q^\mathbb{Z}t^{-}_j=q^\mathbb{Z}(t^{+}_i)^{\pm 1}\cap q^\mathbb{Z}(t^{+}_j)^{\pm 1}=\varnothing$.

Notice that each Laurent polynomial can be written as a Taylor polynomial $z^{-l}C\prod_i(z-t^{+}_i)\prod_j(1-t^{-}_jz)$ divided by $z^l$ for some positive $l$.  

Let $\xi_{k}(z)=\sum_{m=N_{k}}^{\infty}\zeta_{k,m}z^{m}$ be the Laurent expansion of $\xi_{k}(z)$ around $0$, where $N_{k}\in\mathbb{Z}$ and $\zeta_{k,N_{k}}\neq 0$. From now on, we also assume that 
\begin{equation}\label{nondegeneracy}
\frac{\zeta_{k+1,\text{min}(N_{k+1},N_{k})}}{\zeta_{k,\text{min}(N_{k+1},N_{k})}}\notin q^{\mathbb{Z}}\quad \text{or} \quad\frac{\zeta_{k,\text{min}(N_{k+1},N_{k})}}{\zeta_{k+1,\text{min}(N_{k+1},N_{k})}}\notin q^{\mathbb{Z}}, \quad 1\leq k\leq N-1.
\end{equation}
\begin{Rem}
    The condition \eqref{nondegeneracy} becomes equivalent to the nondegeneracy in \cite{Frenkel:2020} for constant twists, that is, when $\zeta_{k}\in\mathbb{C}^{\times}$ for $1\leq k\leq N$.
\end{Rem}
\begin{Def}
 Upon such nondegeneracy conditions, the system of functional $q$-difference equations \eqref{eq:QQAtype} is called nondegenerate $GL(N)$ $QQ$-system.
\end{Def}

\subsection{Bethe Equations}
The correspondence between the $QQ$-system and the Bethe Ansatz equations follows, as in the story for the $Z$-twisted $q$-opers, after evaluating both sides of \eqref{eq:QQAtype} before and after a $q$-shift at zeros $s_{k,i}$ of $Q^+_k$.

\begin{Thm}\label{QQtoBethe} For $1\leq k\leq N$, let
\[
Q_{k}^{+}(z)=
\prod_{j=1}^{p_{k}}\left(z-s_{k,j}\right)\left(\frac{1}{q^{k-1}z}-s_{k,j}\right).
\]
Then the solutions of the nondegenerate generalized $GL(N)$ $QQ$-system are in one-to-one correspondence with the nondegenerate solutions of the following generalized open Bethe Ansatz equations
\begin{equation}\label{eq:bethe}
 -1= \frac{\xi_{N-k+1}(q^{k-2}s_{k,i,})}{\xi_{N-k}(q^{k-1}s_{k,i})} \frac{Q^+_{k-1}(qs_{k,i})Q^+_k(q^{-1}s_{k,i})Q^+_{k+1}(s_{k,i})}{  Q^+_{k-1}(s_{k,i})Q^+_k(qs_{k,i})Q^+_{k+1}(q^{-1}s_{k,i})},
\end{equation}
where $k=1,\dots, N-1$.
\end{Thm}

The exact form of the above \textit{generalized open Bethe Ansatz equations} depends on the functions $Q^\pm_i$ and $\xi_i$. If $Q^\pm_i$ are polynomials and $\xi_i$ are nonzero constants then the Bethe equations are given in \cite{Koroteev:2020tv}. In the next section, we shall impose an additional condition on the section $s(z)$ of the line $\mathcal{L}_1$ which will force $Q^\pm_k$ to be Laurent polynomials.

\begin{proof}
    It remains to show that starting from a solution of Bethe equations \eqref{eq:bethe}, we can solve the $QQ$-system \eqref{eq:QQAtype}. The proof of this fact that we give here is an adaptation of the proof in \cite[Theorem 6.4]{Frenkel:2020}. Let us first rewrite the QQ-system \eqref{eq:QQAtype} in the following way:
    \begin{equation}\label{eq:QQusingphi}
        \xi_{N-k+1}(q^{k-1}z) \phi_k(qz) - \xi_{N-k}(q^{k-1}z) \phi_k(z) =  \frac{Q^+_{k-1}(qz)Q^+_{k+1}(z)}{Q^+_{k}(z)Q^+_{k}(qz)}\,, 
    \end{equation}
    where $\phi_{k}(z)=\frac{Q^-_{k}(z)}{Q^+_{k}(z)}$. Note that the RHS of \eqref{eq:QQusingphi} equals
    \[
   \prod_{j=1}^{p_{k-1}}\frac{\left(qz-s_{k-1,j}\right)\left(1-q^{k-1}zs_{k-1,j}\right)}{q^{k-1}z}\cdot\prod_{j=1}^{p_{k}}\frac{q^{2k-1}z^2}{\left(z-s_{k,j}\right)\left(qz-s_{k,j}\right)\left(1-q^{k-1}zs_{k,j}\right)\left(1-q^{k}zs_{k,j}\right)}
   \]
    \[
    \cdot\prod_{j=1}^{p_{k+1}}\frac{\left(z-s_{k+1,j}\right)\left(1-q^{k}zs_{k+1,j}\right)}{q^{k}z}\,.
    \]
    Multiplying both sides of \eqref{eq:QQusingphi} by $q^{(k-1)p_{k-1}+kp_{k+1}}z^{p_{k-1}+p_{k+1}}\prod_{j=1}^{p_{k}}\left(1-q^{k-1}zs_{k,j}\right)\left(1-q^{k}zs_{k,j}\right)$, we get
\begin{align}\label{qdiffeqn}
        &     q^{(k-1)p_{k-1}+kp_{k+1}}z^{p_{k-1}+p_{k+1}}\Big(\prod_{j=1}^{p_{k}}\left(1-q^{k-1}zs_{k,j}\right) \xi_{N-k+1}(q^{k-1}z) \psi_k(qz) \cr
        &- \prod_{j=1}^{p_{k}}\left(1-q^{k}zs_{k,j}\right)\xi_{N-k}(q^{k-1}z) \psi_k(z) \Big)\cr
    &=  
       \frac{R_{k}(z)}{\prod_{j=1}^{p_{k}}\left(z-s_{k,j}\right)\left(qz-s_{k,j}\right)}\,, 
       \end{align}
    where $\psi_{k}(z)=\phi_{k}(z)\prod_{j=1}^{p_{k}}\left(1-q^{k-1}zs_{k,j}\right)$ and $R_{k}(z)$ is a polynomial with no root equal to $s_{k,j}$ or $q^{-1}s_{k,j}$.

    Using partial fraction decomposition, let us write
    \[
    \frac{R_{k}(z)}{\prod_{j=1}^{p_{k}}\left(z-s_{k,j}\right)\left(qz-s_{k,j}\right)}
    =
    h_{k}(z)+\sum_{j=1}^{p_{k}}\frac{b_{j}}{z-s_{k,j}}+\sum_{j=1}^{p_{k}}\frac{c_{j}}{qz-s_{k,j}},
    \]
    where $h_{k}(z)$ is a polynomial. Let us write $\psi_{k}(z)$ as
    \[
    \psi_{k}(z)=\widetilde{\psi}_{k}(z)+\sum_{j=1}^{p_{k}}\frac{d_{j}}{z-s_{k,j}}.
    \]
    The $d_{j}$s are uniquely determined by requiring the residues on both sides of \eqref{qdiffeqn} to be the same, which is exactly the Bethe equations. Let us set 
    \[
    \widetilde{\psi}_{k}(z)=\sum_{m\geq 0}r_{m}z^{m}
    \]
    and 
    \[
    h_{k}(z)=\sum_{m\geq 0}s_{m}z^{m}.
    \]
    It remains to solve the following $q$-difference equation,
    \begin{equation}
        A_{k}(z)\widetilde{\psi}_{k}(qz)-B_{k}(z)\widetilde{\psi}_{k}(z)=h_{k}(z),
    \end{equation}
where 
\[
A_{k}(z)=q^{(k-1)p_{k-1}+kp_{k+1}}z^{p_{k-1}+p_{k+1}}\prod_{j=1}^{p_{k}}\left(1-q^{k-1}zs_{k,j}\right) \xi_{N-k+1}(q^{k-1}z)
\]
and 
\[
B_{k}(z)=q^{(k-1)p_{k-1}+kp_{k+1}}z^{p_{k-1}+p_{k+1}}\prod_{j=1}^{p_{k}}\left(1-q^{k}zs_{k,j}\right)\xi_{N-k}(q^{k-1}z).
\]
Let $A_{k}(z)=\sum_{m=N_{1}}a_{m}z^{m}$ and $B_{k}(z)=\sum_{m=N_{2}}b_{m}z^{m}$ be the Laurent expansion of $A_{k}(z)$ and $B_{k}(z)$ around $z=0$, where $N_{1}, N_{2}\in\mathbb{Z}$ and $a_{N_{1}}\neq 0$ and $b_{N_{2}}\neq 0$. Then comparing the coefficients of the powers of $z$ in \eqref{qdiffeqn} gives
\[
s_m=\sum_{j=\text{min}(N_{1},N_{2})}^{m}\left(a_{j}q^{m-j}-b_{j}\right)r_{m-j}, \quad m\geq 0.
\]
The above equations can now be solved since $a_{\text{min}(N_{1},N_{2})}q^{m-\text{min}(N_{1},N_{2})}-b_{\text{min}(N_{1},N_{2})}\neq 0$, this follows from our assumption \eqref{nondegeneracy}.  We note here that $a_{N_1}$ and $b_{N_2}$ depend only on $\xi_{N-k+1}$ and $\xi_{N-k}$ and not on Bethe roots.

\end{proof}
\begin{Rem}
    If in the above theorem, instead of evaluating at $s_{k,i}$ and $q^{-1}s_{k,i}$, we evaluate at $\dfrac{1}{q^{k-1}s_{k,i}}$ and $\dfrac{1}{q^{k}s_{k,i}}$ we obtain the following Bethe equations which are equivalent to the original ones
    \begin{equation}
 -1= \frac{\xi_{N-k+1}\left(\dfrac{1}{qs_{k,i}}\right)}{\xi_{N-k}\left(\dfrac{1}{s_{k,i}}\right)} \frac{Q^+_{k-1}\left(\dfrac{1}{q^{k-2}s_{k,i}}\right)Q^+_k\left(\dfrac{1}{q^{k}s_{k,i}}\right)Q^+_{k+1}\left(\dfrac{1}{q^{k-1}s_{k,i}}\right)}{  Q^+_{k-1}\left(\dfrac{1}{q^{k-1}s_{k,i}}\right)Q^+_k\left(\dfrac{1}{q^{k-2}s_{k,i}}\right)Q^+_{k+1}\left(\dfrac{1}{q^{k}s_{k,i}}\right)}.
\end{equation}

\end{Rem}

\subsection{Reflection-Invariant Conditions}
The following definition is a straightforward generalization of Def. \ref{Def:REflInf2}.
\begin{Def}
    A generalized $Z$-twisted Miura $(GL(N),q)$-oper is called \textit{reflection-invariant} if there exists a $q$-gauge transformation $v(z)\in B_+(z)\subset GL(N)(z)$ such that $s(z)=v(z) (0, 0,\dots, 0, 1)^T$ is invariant under the reflection through the unit circle $T$: $s(\frac{1}{z})=s(z)$ and the $q$-connection takes the form $Z(z)\in H(z)$.
\end{Def}

The following is a straightforward generalization of the proof of Proposition \ref{Prop:ReflectionInv}.

\begin{Prop}\label{Th:PropInvA}
For the reflection-invariant generalized $Z$-twisted Miura $(GL(N),q)$-opers in a gauge where $\cL$ is $T$-invariant, the following conditions for any $N\geq2$ are satisfied 

\begin{align}\label{eq:Qksymmetry}
Q^+_k(z)=Q^+_k\left(\frac{1}{q^{k-1}z}\right),&&1\leq k\leq N,    
\end{align}
and
\begin{align}
\xi_N\left(\frac{1}{qz}\right)\xi_{N-1}\left(\frac{1}{qz}\right)=-\xi_i(z)\xi_i \left(\frac{1}{qz}\right), && 1\leq i \leq N,
\label{eq:GL2 embedding to GLN}
\end{align}
and, if $N\geq3$,
\begin{align}
&\xi_{N}\left(\frac{1}{q^{2}z}\right)\xi_{N-1}\left(\frac{1}{q^{2}z}\right)=\prod_{i=1}^{2}\xi_{N-2}\left(\frac{1}{q^{i}z}\right), &&
\\
&\xi_{N}\left(\frac{1}{q^{k-1}z}\right)\xi_{N-1}\left(\frac{1}{q^{k-1}z}\right)=\dfrac{\prod_{i=1}^{k-1}\xi_{N+1-k}\left(\dfrac{1}{q^{i}z}\right)}{\prod_{i=1}^{k-3}\xi_{N+2-k}\left(\dfrac{1}{q^{i+1}z}\right)},&& 4\leq k\leq N.
\label{eq:gl_n_const}
\end{align} 

\end{Prop}
\begin{proof}  
For $k=1$, by the definition of reflection-invariance and Remark \ref{determinant} we have
\begin{align}
    Q^{+}_{1}(z)=Q^{+}_{1}\left(\frac{1}{z}\right).
\end{align}
For $k=2$,  the reflection invariance provides a constraint on $Z(z)$ as follows. Using the argument as in Proposition \ref{Prop:ReflectionInv} we have 
\begin{align}
&\mathcal{D}_2\left(\frac{1}{q z}\right)=-\det(Z(1/qz))\xi^{-1}_{1}(1/qz)\cdots\xi^{-1}_{N-2}(1/qz)\nonumber
\\
&\times\left(e_{1}\wedge\cdots \wedge e_{N-2}\wedge s(qz)\wedge Z^{-1}(1/qz)s(z)\right).
\end{align}
By the parametric argument as in Proposition \ref{Prop:ReflectionInv} we have the following constraint on $Z(z)$ that is equivalent to $(3.18)$:
\begin{equation}\label{eq:sln}
    Z^{-1}(1/qz)=-\xi_N^{-1}(1/qz)\xi_{N-1}^{-1}(1/qz) Z(z),\qquad Q^+_2(z)=Q^+_2(1/qz),
\end{equation}
where $Z(z)=\diag(\xi_1(z),\cdots,\xi_N(z))$. 

For $3\leq k\leq N$, we consider the $q$-Wronskian at $1/z$:
\begin{align}
    \mathcal{D}_k\left(\frac{1}{z}\right)=\left(\bigwedge_{i=1}^{N-k}e_i\right)\wedge\left(\bigwedge_{j=0}^{k-1}\left(\prod_{m=0}^{j-1} Z\left(\frac{q^{k-2-m}}{z}\right)\right)\cdot s\left(\frac{q^{k-1-j}}{z}\right)\right).
\end{align}
By repeatedly using $s(1/z)=s(z)$ and replacing $z$ by $qz$ (in the same order), we arrive at
\begin{align}
&\mathcal{D}_k\left(\frac{1}{q^{k-1}z}\right)  =\left(\bigwedge_{i=1}^{N-k}e_i\right)\wedge\left(\bigwedge_{j=0}^{k-1}\left(\prod_{m=0}^{j-1} Z\left(\frac{1}{q^{m+1}z}\right)\right)\cdot s\left(q^{j}z\right)\right).
\end{align}
By Lemma \ref{Th:DetTransform}, 
\begin{align}
&\mathcal{D}_k\left(\frac{1}{q^{k-1}z}\right)  =    \left(\prod_{m=1}^{k-1}\det\left[ Z\left(\frac{1}{q^{m}z}\right)\right]\right)\bigwedge_{i=1}^{N-k}\left(\prod_{m=1}^{k-1} Z^{-1}\left(\frac{1}{q^{m}z}\right)\right)e_i
\\
&\wedge \left(\bigwedge_{j=0}^{k-1}\left(\prod_{m=0}^{k-j-2} Z^{-1}\left(\frac{1}{q^{k-1-m}z}\right)\right)\cdot s\left(q^{j}z\right)\right).\nonumber
\end{align}
By \eqref{eq:GL2 embedding to GLN}, we substitute $ Z^{-1}(1/q^{m}z)=-\xi_N^{-1}(1/q^{m}z)\xi_{N-1}^{-1}(1/q^{m}z) Z(q^{m-1}z)$ into the above equation and get
\begin{align}
 &\mathcal{D}_k\left(\frac{1}{q^{k-1}z}\right)=\left(\prod_{m=1}^{k-1}\det\left[ Z\left(\frac{1}{q^{m}z}\right)\right]\right)\left(\prod_{m=0}^{k-2}\prod_{i=1}^{N-k}\xi_i^{-1}\left(\frac{1}{q^{k-1-m}z}\right)\right)\left(\bigwedge_{i=1}^{N-k}e_i\right)
 \\
 &\wedge\left(\bigwedge_{j=0}^{k-1}\left(\prod_{m=0}^{k-j-2}  -\xi_N^{-1}\left(\frac{1}{q^{k-1-m}z}\right)\xi_{N-1}^{-1}\left(\frac{1}{q^{k-1-m}z}\right) Z(q^{k-2-m}z)\right)\cdot s\left(q^{j}z\right)\right), \nonumber
\end{align}
since $Z^{-1}(z)e_i=\xi^{-1}_{i}(z)e_i$.

Manipulation of the above formula gives 
\begin{align}
 &\mathcal{D}_k\left(\frac{1}{q^{k-1}z}\right)=\left(\prod_{j=0}^{k-2}\prod_{i=1}^{k}\xi_{N+1-i}\left(\frac{1}{q^{j+1}z}\right)\right)\prod_{j=0}^{k-2}\prod_{i=1}^{k-1-j} \left( -\xi_N^{-1}\left(\frac{1}{q^{j+i}z}\right)\xi_{N-1}^{-1}\left(\frac{1}{q^{j+i}z}\right) \right)
 \\
 &\times\left(\bigwedge_{i=1}^{N-k}e_i\right)\wedge \bigwedge_{j=0}^{k-1}\left(\left(\prod_{m=0}^{k-j-2}Z(q^{k-2-m}z)\right)\cdot s\left(q^{j}z\right)\right).\nonumber
\end{align}
Rearranging the order of the wedge product yields $(-1)^{k(k-1)/2}$, and we have
\begin{align}
 \mathcal{D}_k\left(\frac{1}{q^{k-1}z}\right)&=C_k(z)\mathcal{D}_k(z),
\end{align}
where
\begin{align}
C_k=(-1)^{\frac{k(k-1)}{2}}\prod_{j=0}^{k-2}\left[\left(\prod_{i=1}^{k}\xi_{N+1-i}\left(\frac{1}{q^{j+1}z}\right)\right)\left(\prod_{i=1}^{k-1-j}\left(-\xi^{-1}_{N}\left(\frac{1}{q^{j+i}z}\right)\xi^{-1}_{N-1}\left(\frac{1}{q^{j+i}z}\right)\right)\right)\right].
\label{eq:glnconst1}
\end{align} 
By the parametric argument as in Proposition \ref{Prop:ReflectionInv} we have
\begin{align}
    C_k(z)\mathcal{D}_k(z)=\mathcal{D}_k(z), && Q_{k}^{+}(z)=Q_{k}^{+}\left(\frac{1}{q^{k-1}z}\right),
\end{align}
therefore $C_k(z)=1$.

Since $C_k(z)$s are defined recursively, we have
\begin{align}
\mathcal{C}_k(z)
:=\frac{C_k(z)}{C_{k-1}(z)}
=
\left[\prod_{i=1}^{k-1}\xi_{N+1-i}\left(\frac{1}{q^{k-1}z}\right)\xi_{N+1-k}\left(\frac{1}{q^{i}z}\right)\right]\left[\xi^{-1}_{N}\left(\frac{1}{q^{k-1}z}\right)\xi^{-1}_{N-1}\left(\frac{1}{q^{k-1}z}\right)\right]^{k-1}
=1.
\end{align}
Taking the ratio $\cC_k(z)/\cC_{k-1}(qz)=1$, we obtain the following constraints
\begin{align}
&\xi_{N}\left(\frac{1}{q^{k-1}z}\right)\xi_{N-1}\left(\frac{1}{q^{k-1}z}\right)=\left\{\begin{array}{cc}
   \prod_{i=1}^{k-1}\xi_{N+1-k}\left(\dfrac{1}{q^{i}z}\right),  & k=3 \\
   & \\
    \dfrac{\prod_{i=1}^{k-1}\xi_{N+1-k}\left(\dfrac{1}{q^{i}z}\right)}{\prod_{i=1}^{k-3}\xi_{N+2-k}\left(\dfrac{1}{q^{i+1}z}\right)}, & 4\leq k\leq N
\end{array}\right..
\end{align}
\end{proof}
\begin{Cor}
For the boundary condition $Q_{N}^{+}(z)=\L(z)$, we have
\begin{align}
    \L(z)= \L\left(\frac{1}{q^{N-1}z}\right).
\end{align}
\end{Cor}
\qed

The last equation, in particular, implies that 
\begin{equation}
    \Lambda(z)=\gamma_N\prod_{j=1}^{L}\left(z-a_{j}\right)\left(\frac{1}{q^{N-1}z}-a_{j}\right)
\end{equation}
for some integer $L$.

\vskip.1in
It helps to look at some lower rank examples in order to gain some intuition.

\subsection{Example: \texorpdfstring{$(GL(3),q)$}{(GL(3),q)}-Oper}
We have the complete flag of subbundles as $\mathcal{L}_1\subset\mathcal{L}_2\subset E$ and Span $s(z)=\mathcal{L}_1$.
The two conditions for the Miura $(GL(3),q)$-oper read
\begin{align}\label{eq:MiuraOperSL3}
    e_1\wedge s(qz)\wedge A(z)s(z)&= Q^+_2(z)\,,\cr
    s(q^2z)\wedge A(qz) s(qz)\wedge A(qz) A(z)s(z) &= \Lambda(z)\,.
\end{align}
Consider them both at $1/z\in V\subset\mathbb{P}^1$
\begin{align}
e_1\wedge s\!\left(\frac{q}{z}\right)\wedge A\!\left(\frac{1}{z}\right)s\!\left(\frac{1}{z}\right) 
&= Q^+_2\!\left(\frac{1}{z}\right)\,,\\
s\!\left(\frac{q^2}{z}\right)\wedge A\!\left(\frac{q}{z}\right) s\!\left(\frac{q}{z}\right)\wedge 
A\!\left(\frac{q}{z}\right) A\!\left(\frac{1}{z}\right)s\!\left(\frac{1}{z}\right) 
&= \Lambda\!\left(\frac{1}{z}\right)\,.\nonumber
\end{align}
In the gauge where $s(z)$ is $T$-invariant we get
\begin{align}
e_1\wedge s\!\left(\frac{q}{z}\right)\wedge A\!\left(\frac{1}{z}\right)s\!\left(z\right) 
&= Q^+_2\!\left(\frac{1}{z}\right)\,,\\
s\!\left(\frac{q^2}{z}\right)\wedge A\!\left(\frac{q}{z}\right) s\!\left(\frac{q}{z}\right)\wedge 
A\!\left(\frac{q}{z}\right) A\!\left(\frac{1}{z}\right)s\!\left(z\right) 
&= \Lambda\!\left(\frac{1}{z}\right)\,.\nonumber
\end{align}
Using Lemma \ref{Th:DetTransform} we get 
\begin{align}
\text{det} A\!\left(\frac{1}{z}\right)\cdot A^{-1}\!\left(\frac{1}{z}\right)e_1\wedge A^{-1}\!\left(\frac{1}{z}\right)s\!\left(\frac{q}{z}\right)\wedge s\!\left(z\right) 
&= Q^+_2\!\left(\frac{1}{z}\right)\,,\\
\text{det} A\!\left(\frac{1}{z}\right)A\!\left(\frac{q}{z}\right)\cdot A^{-1}\!\left(\frac{q}{z}\right)A^{-1}\!\left(\frac{1}{z}\right)s\!\left(\frac{q^2}{z}\right)\wedge A^{-1}\!\left(\frac{1}{z}\right) s\!\left(\frac{q}{z}\right)\wedge 
s\!\left(z\right) 
&= \Lambda\!\left(\frac{1}{z}\right)\,.\nonumber
\end{align}
Shifting both equations by $q$ we get
\begin{align}
\text{det} A\!\left(\frac{1}{qz}\right)\cdot A^{-1}\!\left(\frac{1}{qz}\right)e_1\wedge A^{-1}\!\left(\frac{1}{qz}\right)s\!\left(\frac{1}{z}\right)\wedge s\!\left(qz\right) 
&= Q^+_2\!\left(\frac{1}{qz}\right)\,,\\
\text{det} A\!\left(\frac{1}{qz}\right)A\!\left(\frac{1}{z}\right)\cdot A^{-1}\!\left(\frac{1}{z}\right)A^{-1}\!\left(\frac{1}{qz}\right)s\!\left(\frac{q}{z}\right)\wedge A^{-1}\!\left(\frac{1}{qz}\right) s\!\left(\frac{1}{z}\right)\wedge 
s\!\left(qz\right) 
&= \Lambda\!\left(\frac{1}{qz}\right)\,.\notag
\end{align}
Using the $T$-invariance again we get
\begin{align}
\text{det} A\!\left(\frac{1}{qz}\right)\cdot A^{-1}\!\left(\frac{1}{qz}\right)e_1\wedge A^{-1}\!\left(\frac{1}{qz}\right)s\!\left(z\right)\wedge s\!\left(qz\right) 
&= Q^+_2\!\left(\frac{1}{qz}\right)\,,\\
\text{det} A\!\left(\frac{1}{qz}\right)A\!\left(\frac{1}{z}\right)\cdot A^{-1}\!\left(\frac{1}{z}\right)A^{-1}\!\left(\frac{1}{qz}\right)s\!\left(\frac{q}{z}\right)\wedge A^{-1}\!\left(\frac{1}{qz}\right) s\!\left(z\right)\wedge 
s\!\left(qz\right) 
&= \Lambda\!\left(\frac{1}{qz}\right)\,.\notag
\end{align}
Finally, we need to perform another $q$-shift in the second equation and use the $T$-invariance to get
\begin{equation}
\text{det} A\!\left(\frac{1}{q^2z}\right)A\!\left(\frac{1}{qz}\right)\cdot A^{-1}\!\left(\frac{1}{qz}\right)A^{-1}\!\left(\frac{1}{q^2z}\right)s\!\left(z\right)\wedge A^{-1}\!\left(\frac{1}{q^2z}\right) s\!\left(qz\right)\wedge 
s\!\left(q^2z\right) 
= \Lambda\!\left(\frac{1}{q^2z}\right)\,.\notag
\end{equation}
In other words, after reshuffling we get
\begin{align}
-\text{det} A\!\left(\frac{1}{qz}\right)\cdot A^{-1}\!\left(\frac{1}{qz}\right)e_1\wedge s\!\left(qz\right) \wedge A^{-1}\!\left(\frac{1}{qz}\right)s\!\left(z\right)
&= Q^+_2\!\left(\frac{1}{qz}\right)\,,\\
-\text{det} A\!\left(\frac{1}{q^2z}\right)A\!\left(\frac{1}{qz}\right)\cdot s\!\left(q^2z\right) \wedge A^{-1}\!\left(\frac{1}{q^2z}\right) s\!\left(qz\right)\wedge A^{-1}\!\left(\frac{1}{qz}\right)A^{-1}\!\left(\frac{1}{q^2z}\right)s\!\left(z\right)
&= \Lambda\!\left(\frac{1}{q^2z}\right)\,.\notag
\end{align}
The above equations must match the equations in \eqref{eq:MiuraOperSL3}. From the first equation we gain
\begin{equation}\label{formula for inverse}
     A^{-1}\!\left(\frac{1}{qz}\right) = -\alpha_1\!\left(\frac{1}{qz}\right)\text{det} A\!\left(\frac{1}{qz}\right)^{-1}\cdot A(z)\,,\quad Q^+_2\!\left(\frac{1}{qz}\right) = Q^+_2\!\left(z\right)\,.
\end{equation}
Now we substitute the expression for $A^{-1}\!\left(\frac{1}{qz}\right)$ into the second equation to get
\begin{align}
    -\text{det} A\!\left(\frac{1}{q^2z}\right)\text{det}A\!\left(\frac{1}{qz}\right) &\cdot s\!\left(q^2z\right) \nonumber\\
    \wedge \left(-\alpha_1\!\left(\frac{1}{q^2z}\right)\text{det} A\!\left(\frac{1}{q^2z}\right)^{-1}\cdot A(qz)\right) &\cdot s\!\left(qz\right)
    \\
    \wedge \alpha_1\!\left(\frac{1}{qz}\right)\text{det} A\!\left(\frac{1}{qz}\right)^{-1} \alpha_1\!\left(\frac{1}{q^2z}\right)\text{det} A\!\left(\frac{1}{q^2z}\right)^{-1}\cdot  A(z)A(qz)&\cdot s\!\left(z\right)
= \Lambda\!\left(\frac{1}{q^2z}\right)\nonumber
\end{align}

In the gauge where $A(z)=\text{diag}(\xi_1,\xi_2,\xi_3)(z)$ is diagonal, we have det$A(z)=\xi_1\xi_2\xi_3(z)$ for the generalized $Z$-twisted oper. After the cancellations in the above equations we get
\begin{equation}
    \xi_1\!\left(\frac{1}{qz}\right)\xi_1\!\left(\frac{1}{q^2z}\right)^2\text{det} A\!\left(\frac{1}{q^2z}\right)^{-1}=1\,,\quad
    \Lambda\!\left(\frac{1}{q^2z}\right)= \Lambda(z)\,.
\end{equation}
the first equation above is equivalent to
\begin{equation}
    \xi_1\!\left(\frac{1}{z}\right)\xi_1\!\left(\frac{1}{qz}\right) = \xi_2\!\left(\frac{1}{qz}\right)\xi_3\!\left(\frac{1}{qz}\right)
\end{equation}
Meanwhile, the first equation in \eqref{formula for inverse} yields 
\begin{equation}
    \xi_i(z)\xi_i\left(\frac{1}{qz}\right) = -\xi_2(z)\xi_3(z)\,,\qquad i=1,2,3\,.
\end{equation}
so the relations read 
\begin{equation}\label{eq:ReflCondGL3}
    \xi_{1}\left(\frac{1}{z}\right)=-\xi_1(z), \quad \xi_{2}\left(\frac{1}{qz}\right)=-\xi_{3}(z), \quad \xi_{2}(z)\xi_{3}(z)=\xi_{1}(z)\xi_{1}(qz).
\end{equation}
From the third equation, we have
\begin{equation}
\xi_{3}(z)\xi_{3}\left(\frac{1}{qz}\right)=\xi_{2}(z)\xi_{2}\left(\frac{1}{qz}\right)=\xi_{1}(z)\xi_{1}\left(\frac{1}{qz}\right).
\end{equation}

\subsubsection*{Example}
Let $f(z):=\frac{\xi_{1}}{\xi_{2}}(z).$ Then the above equation is equivalent to $f(\frac{1}{qz})=f(z)^{-1}$.
When $f(z)=1$, that is, when $\xi_{1}(z)=\xi_{2}(z)$, we can take 
\begin{equation}
\xi_{1}(z)=a\left(z-1/z\right)\prod_{j=1}^{m}\left(z-c_{j}\right)\left(1/z-c_{j}\right),
\end{equation}
where $a\in\mathbb{C}^\times$.

\subsection{Constraints for \texorpdfstring{$GL(4)$}{GL(4)}}
First, we have the analogue of \eqref{formula for inverse}:
\begin{equation}
    A^{-1}\!\left(\frac{1}{qz}\right) = -\xi_1\!\left(\frac{1}{qz}\right)\xi_2\!\left(\frac{1}{qz}\right)\text{det} A\!\left(\frac{1}{qz}\right)^{-1}\cdot A(z)\,.
\end{equation}
Next, we have 
\begin{equation}
    \xi_2\!\left(\frac{1}{z}\right)\xi_2\!\left(\frac{1}{qz}\right) = \xi_3\!\left(\frac{1}{qz}\right)\xi_4\!\left(\frac{1}{qz}\right).
\end{equation}
The third constraint reads
\begin{align}
    &\det \left(A\left(\frac{1}{qz}\right)A\left(\frac{1}{q^2z}\right)A\left(\frac{1}{q^3z}\right)\right)\cr
    &=
    \left(\xi_3\!\left(\frac{1}{q^3z}\right)\xi_4\!\left(\frac{1}{q^3z}\right)\right)^{3}
    \left(\xi_3\!\left(\frac{1}{q^2z}\right)\xi_4\!\left(\frac{1}{q^2z}\right)\right)^{2}
    \left(\xi_3\!\left(\frac{1}{qz}\right)\xi_4\!\left(\frac{1}{qz}\right)\right).
\end{align}

The above constraints are reproduced from Proposition \ref{Th:PropInvA} by taking $N=4$.
\begin{align}
   - \xi_{i}(z)\xi_{i}\left(\frac{1}{qz}\right)=\xi_{3}\left(\frac{1}{qz}\right) \xi_{4}\left(\frac{1}{qz}\right),&& 1\leq i\leq 4\,.
\end{align}
\begin{align}
   -\xi_{2}\left(z\right)=\xi_{2}\left(\frac{1}{z}\right),
&&
\xi_{2}(z)=\frac{\xi_{1}\left(z\right)\xi_{1}\left(qz\right)}{\xi_{1}\left(\frac{1}{z}\right)}     \,.
\end{align}
These relations lead to an equation for $\xi_1(z)$,
\begin{align}
\xi_{1}\left(\frac{1}{z}\right)^2 \xi_{1}\left(\frac{q}{z}\right) =-\xi_{1}^2\left(z\right)\xi_{1}\left(qz\right)\,.
\end{align}

\section{$GL(N)$ Reflection Invariant Equations and Bethe Ansatz}\label{Sec:SolutionsReflEqns}
In this section, we study the space of solutions of the reflection invariant equations \eqref{eq:GL2 embedding to GLN} for the $q$-connections which admit at most simple pole at $0$ or $\infty$ similar to the $GL(2)$ case in Section \ref{GL2}. We can refer to such connection as \textit{tame}.

\subsection{Generalized Twists and Bethe Equations}
First, we would like to introduce the following parameterization for the eigenvalues of the $q$-connection $Z$,
\begin{equation}\label{eq:GeneralizedTw}
    \xi_k(z)=  \prod_{i=1}^{k-1} x_{N-i} (q^{k-N+1}z)\cdot x_{N}(q^k z)\,,
\end{equation}
where $x_i(z)$ are some new rational functions. This will allow us to express the generalized open Bethe equations \eqref{eq:bethe} in terms of these functions and to compare our results with the literature on integrable systems. Thus we get for \eqref{eq:bethe} the following explicit presentation of the generalized open Bethe equations,
\begin{align}\label{eq:BetheEqnsGLN}
    x_k(s_{k,i}) \cdot
    q^{p_{k}-p_{k-1}}
    \cdot&
    \prod_{j=1}^{p_{k-1}}\frac{qs_{k,i}-s_{k-1,j}}{s_{k,i}-s_{k-1,j}}\frac{q^{k-1}s_{k,i}s_{k-1,j}-1}{q^{k-2}s_{k,i}s_{k-1,j}-1}    
    \cr
    \cdot&\prod_{j=1}^{p_{k}}\frac{s_{k,i}-qs_{k,j}}{qs_{k,i}-s_{k,j}
    }\frac{q^{k-2}s_{k,i}s_{k,j}-1}{q^{k}s_{k,i}s_{k,j}-1}\cr
    \cdot&
\prod_{j=1}^{p_{k+1}}\frac{s_{k,i}-s_{k+1,j}}{s_{k,i}-qs_{k+1,j}}
\frac{q^{k}s_{k,i}s_{k+1,j}-1}{q^{k-1}s_{k,i}s_{k+1,j}-1}
=-1\,,    
\end{align}
for $k=1,\dots,N-1$ where $p_0=0$, and $s_{N,i}=a_i$.

Our results thus far can be formulated in the following theorem.

\begin{Thm}
Let $\xi_1(z),\dots,\xi_N(z)$ and correspondingly $x_1(z),\dots,x_{N-1}(z)$ satisfy the reflection invariant conditions \eqref{eq:GL2 embedding to GLN}.
Then there is a one-to-one correspondence between the set of nondegenerate generalized $Z$-twisted reflection-invariant Miura $(GL(N),q)$-opers and the set of nondegenerate solutions of the generalized open XXZ Bethe Ansatz equations \eqref{eq:BetheEqnsGLN}.
\end{Thm}

In other words, the space of generalized $Z$-twisted Miura $(GL(N),q)$-opers is described by \eqref{eq:BetheEqnsGLN} for arbitrary rational functions $x_1(z),\dots,x_{N-1}(z)$, whereas in order to describe the space of generalized $Z$-twisted \textit{reflection-invariant} Miura $(GL(N),q)$-opers we need to solve the reflection-invariant conditions \eqref{eq:GL2 embedding to GLN}.

The reflection invariant condition is an important building block in the study of Bethe Ansatz for open spin chains. The shape of generalized twists $x_i(s)$ in \eqref{eq:BetheEqnsGLN} is determined by the K-matrix which together with an R-matrix satisfies the reflection equation.

\begin{proof}
    From \eqref{eq:Qksymmetry} we get $Q_{k}^{+}(z)=\gamma_k
\prod_{j=1}^{p_{k}}\left(z-s_{k,j}\right)\left(\dfrac{1}{q^{k-1}z}-s_{k,j}\right)$, $1\leq k\leq N$ for some constants $\gamma_k$. Using \eqref{eq:GeneralizedTw} we can directly obtain \eqref{eq:BetheEqnsGLN} provided that the nondegeneracy conditions on the roots of all Laurent polynomials hold.
\end{proof}

\subsection{Solutions of the Reflection Invariant Conditions} 
Let us now discuss the solutions of the reflection invariant conditions in terms of functions $x_i(z)$. First, let us consider the low rank examples.

\subsubsection{$(GL(2),q)$-opers} 
For $N=2$ we get
\begin{equation}
   \xi_1(z)=x_2(q z)\,,\qquad \xi_2(z)=x_1(q z) x_2(q^2 z) \,,
\end{equation}
so $\dfrac{\xi_2(q^{-1}z)}{\xi_1(z)}=x_1(z)$.

The reflection invariance conditions \eqref{eq:AreflectionCompts} then read
\begin{equation}
    x_2(q z)=-x_1\left(\frac{1}{z}\right) x_2\left(\frac{q}{z}\right)\,,\qquad x_1(q z) x_2(q^2 z) = -x_2\left(\frac{1}{z}\right)\,,
\end{equation}
which can be thought of as a definition of $x_1$ in terms of $x_2$. Therefore, we get
\begin{equation}\label{eq:x1GL2}
    x_1(z)=-\frac{x_2\left(\dfrac{q}{z}\right)}{x_2(qz)}\,.
\end{equation}

\subsubsection{$(GL(3),q)$-opers} 
For $N=3$ we get
\begin{equation}\label{eq:GL3twists}
   \xi_1(z)=x_3(q z)\,,\quad \xi_2(z)=x_2(z) x_3(q^2 z)\,,\quad \xi_3(z)=x_1(q z) x_2(q z) x_3(q^3 z)\,,
\end{equation}
so $\dfrac{\xi_3(q^{-1}z)}{\xi_2(z)}=x_1(z)$ and $\dfrac{\xi_2(z)}{\xi_1(qz)}=x_2(z)$.
Then the reflection invariant conditions \eqref{eq:ReflCondGL3} after straightforward substitution of \eqref{eq:GL3twists} are equivalent to the following
\begin{equation}
    x_{3}\left(\frac{q}{z}\right)=-x_3(qz), \quad x_1(z)=-\frac{x_2\left(\dfrac{1}{z}\right)}{x_2(z)}\frac{x_3\left(\dfrac{q^2}{z}\right)}{x_3(q^2z)}, \quad x_2\left(\frac{1}{qz}\right)=x_2(z)^{-1}\,.
\end{equation}
We can rewrite them as
\begin{equation}\label{eq:GL3tele}
    x_1(z)=\frac{x_2\left(\frac{1}{z}\right)}{x_2(z)}\frac{x_3(z)}{x_3(q^2z)}\,, \quad  x_2\left(\frac{1}{qz}\right)=x_2(z)^{-1}\,,\quad x_{3}\left(\frac{q}{z}\right)=-x_3(qz)\,.
\end{equation}

\subsubsection{General Case} 
\eqref{eq:GL2 embedding to GLN} and \eqref{eq:gl_n_const} are rewritten as
\begin{align}\label{eq:tele_GL2 embedding to GLN}
&(1\leq i\leq N),\qquad- x_{N}\left(\frac{q^{N-1}}{z}\right) x_{N}\left(\frac{q^{N-2}}{z}\right)  x_{1}\left(\frac{1}{z}\right)  \prod_{j=1}^{N-2} x_{N-j}\left(\frac{1}{z}\right)  x_{N-j}\left(\frac{1}{qz}\right) 
\\
&= x_{N}(q^i z) x_{N}\left(\frac{q^{i-1}}{z}\right)\prod_{j=1}^{i-1} x_{N-j} (q^{i-N+1}z)x_{N-j}\left(\frac{q^{i-N}}{z}\right)\,,\qquad \nonumber
\end{align}
and 
\begin{align}
\prod_{i=1}^{k-3}x_{k-1}\left(\frac{1}{q^{i} z}\right)=   \prod_{i=1}^{k-3}x_{k-1}^{-1}\left(\frac{z}{q^{i} }\right)\,,
&&\frac{x_{N}\left(\dfrac{q^{N+1-k} }{z}\right)}{x_{N}(q^{N+1-k}z)}= - \prod_{i=1}^{k-3}x_{k-1}\left(\frac{1}{q^{i} z}\right)\prod_{i=1}^{N-k}\frac{ x_{N-i}(q^{2-k}z) }{ x_{N-i}\left(\dfrac{q^{2-k} }{z}\right) }  \,,
\end{align}
where $3\leq k \leq N$. These provide several expressions of $x_1(z)$. Some constraints come from this redundancy. 
\begin{align}
x_{1} (z)=    -\frac{x_{N}\left(\dfrac{q^{N-1}}{z}\right)    \prod_{i=1}^{N-2}  x_{N-i}\left(\dfrac{1}{z}\right)}{x_{N}(q^{N-1}z) \prod_{i=1}^{N-2} x_{N-i} (z)}\,. 
\end{align}

\subsection{Imposing Tame Singularity Conditions}
Finally, we impose the condition that the $q$-oper connection $A(z)$ exhibits at most simple pole behavior (i.e. either a simple pole or a constant) at $0$ and $\infty$ on $\mathbb{P}^1$. We list the results for $GL(2)$ and $GL(3)$ opers below.

\subsubsection{$(GL(2),q)$-opers} 
Assume $A(z)$ has constant asymptotics at zero and infinity. We add two poles for $\xi_1(z)$ at the fixed points of the involution $z\mapsto \frac{1}{qz}$ which result in two zeros 
\begin{equation}
    \xi_1(z)=c\frac{(z-d_+)(z-d_-)}{qz^2-1}
\end{equation}
so
\begin{equation}\label{eq:tameGL2}
    x_2(z)=\frac{c}{q}\frac{(z-qd_+)(z-qd_-)}{z^2-q}\,,\qquad x_1(z)=\frac{(d_- z-1) (d_+ z-1) \left(q
   z^2-1\right)}{(d_--z) (z-d_+) \left(z^2-q\right)}
\end{equation}

\subsubsection{$(GL(3),q)$-opers: General Solution} 
Equation $x_{3}(\frac{q}{z})=-x_3(qz)$ can be rewritten via $h(z)=x_3(qz)$ as $h(z)=-h(\frac{1}{z})$. Its rational solution can be represented as
\begin{equation}
h(z)=\left(\frac{z^2-1}{z}\right)F\left(z+\frac{1}{z}\right),
\end{equation}
where $F(z)=P(z)/Q(z)$ is any rational function.
Explicitly, $h(z)$ is of the form
\begin{equation} 
h(z)=C\left(\frac{z^2-1}{z}\right)
\; z^{\sum_j n_j - \sum_i m_i}
\;
\frac{
\prod_i (z-a_i)^{m_i}(z-1/a_i)^{m_i}
}{
\prod_j (z-b_j)^{n_j}(z-1/b_j)^{n_j},
}
\end{equation}
where $C$ is any constant.
Hence,
\begin{equation}
x_{3}(z)=
C\,
\left(\frac{z^2 - q^2}{z}\right)
z^{\sum_{j} n_{j} - \sum_{i} m_{i}}
\;
\frac{
\prod_{i} (z-a_i q)^{m_i}(z-q/a_i)^{m_i}
}{
\prod_{j} (z-b_j q)^{n_j}(z-q/b_j)^{n_j}
}.
\end{equation}
For $x_{2}(z)$ which satisfies $x_{2}(z)x_{2}(1/(qz))=1$, we have
\begin{equation}
x_{2}(z)=
\pm
q^{\frac{k}{2}+\sum k_{i}}z^{k+\sum k_{i}}
\prod_{i}
\left(
\frac{z - c_i}{1 - c_i q z}
\right)^{k_i},
\end{equation}
where $k$ is any integer. Note that when $k$ and $\sum k_{i}$ are of the same parity, $x_{2}(z)$ comes from the Ansatz $x_{2}(z)=\pm\frac{P(z)}{P(1/(qz))}$, where $P(z)$ is a rational function.
As $x_{1}(z)$ is given by 
\[
x_1(z)=\frac{x_2\left(\frac{1}{z}\right)}{x_2(z)}\frac{x_3(z)}{x_3(q^2z)}\,,
\]
an explicit formula for $x_{1}(z)$ is given by
\begin{equation}
\begin{aligned}
x_1(z)
&= z^{-2k-2\sum_i k_i}
\prod_i
\left[
\frac{(1-c_iz)(1-qc_iz)}{(z-qc_i)(z-c_i)}
\right]^{k_i}
\\[6pt]
&\quad \times
\frac{z^2-q^2}{q^2z^2-1}
\cdot q^{-2\left(\sum_j n_j-\sum_i m_i\right)}
\\[6pt]
&\quad \times
\prod_i
\left(
\frac{(z-a_i q)(z-q/a_i)}
{(q^2 z-a_i q)(q^2 z-q/a_i)}
\right)^{m_i}
\\[6pt]
&\quad \times
\prod_j
\left(
\frac{(q^2 z-b_j q)(q^2 z-q/b_j)}
{(z-b_j q)(z-q/b_j)}
\right)^{n_j}.
\end{aligned}
\end{equation}

\subsubsection{Examples of Solutions}\label{Sec:List}
We shall list several examples of the above solutions below
\begin{enumerate}[(a)]
    \item 
The minimal solution is $x_2(z)=1$ and 
\begin{equation}
    x_3(z)=c\frac{z^2-q^2}{qz}\,.
\end{equation}
In this case
\begin{equation}
    x_1(z)=\frac{x_3(z)}{x_3(q^2 z)}
\end{equation}
and only consists of the contact term. 
\vskip.1in
For the following solutions, we begin with specifying the components $H(z)$ and $F(z)$ for $x_{3}(z)$ and polynomial $P(z)$ for $x_{2}(z)$ written in the form
\begin{equation}
    x_3(z)=F\left(z+\frac{q^2}{z}\right)\left(H(z)-H\left(\frac{q^2}{z}\right)\right)\,,\qquad x_2(z)=\frac{P(z)}{P(\frac{1}{qz})}\,.
\end{equation}
All of the following solutions provide the tame $q$-connection. 
\item $H(z)=\dfrac{z-b}{z-a}+cz$, $P(z)=\dfrac{z-e}{z-d}$,  $F(z)=1$. 
\begin{align}
&x_3(z)=\frac{q^2-b z}{a z-q^2}+\frac{b-z}{a-z}-\frac{c q^2}{z}+c z,
\qquad
x_2(z)=\frac{(e-z) (d q z-1)}{(d-z) (e q z-1)}, 
\\
&x_1(z)=\frac{(a z-1) (z-d) (e z-1) \left(z^2-q^2\right) \left(a-q^2 z\right) (z-d q) (e q z-1) }{(a-z) (d z-1) (z-e) \left(q^2 z^2-1\right) \left(a z-q^2\right) (d q z-1) (z-e q)}
\nonumber
\\
&\times \frac{ \left(a^2 c z-a \left(c \left(q^2+z^2\right)+z\right)+z \left(b+c q^2\right)\right)}{\left(a^2 c z-a \left(c (q^2 z^2+1)+z\right)+z \left(b+c q^2\right)\right)}.
\nonumber
\end{align}

\item $H(z)=\dfrac{z-b}{z-a}+cz$, $P(z)=dz+\dfrac{e}{z}+f$,  $F(z)=1$. 
\begin{align}
&x_3(z)=\frac{q^2-b z}{a z-q^2}+\frac{b-z}{a-z}-\frac{c q^2}{z}+c z,
\qquad
x_2(z)=\frac{q (z (d z+f)+e)}{d+q z (e q z+f)}, 
\\
&x_1(z)=\frac{(a z-1) \left(z^2-q^2\right) \left(a-q^2 z\right) (d+z (e z+f)) }{(a-z) \left(q^2 z^2-1\right) \left(a z-q^2\right) (z (d z+f)+e) }
\nonumber
\\
&\times\frac{ \left(a^2 c z-a \left(c \left(q^2+z^2\right)+z\right)+z \left(b+c q^2\right)\right) (d+q z (e q z+f))}{ \left(a^2 c z-a \left(c( q^2 z^2+1)+z\right)+z \left(b+c q^2\right)\right) \left(z (d z+f q)+e q^2\right)}.
\nonumber
\end{align}

\item $H(z)=A\,z^2$,\, $F(z)=\left[\left(\frac{q^2}{z}+z\right)(a-z) (c-z)  \left(a-\frac{q^2}{z}\right) \left(c-\frac{q^2}{z}\right)\right]^{-1}$,

$P(z)=\dfrac{z}{q^2}-\dfrac{1}{z}$.
\begin{align}\label{eq:typeD}
&x_3(z)=\frac{A\, z (q-z) (q+z)}{(a-z) (z-c) \left(a z-q^2\right) \left(c z-q^2\right)},\qquad x_2(z)=\frac{q \left(z^2-q^2\right)}{q^4 z^2-1},
\\
&x_1(z)=\frac{\left(q^4 z^2-1\right)}{\left(z^2-q^4\right) }\frac{(a z-1) (c z-1)  \left(a-q^2 z\right) \left(q^2 z-c\right)}{(a-z) (z-c) \left(a z-q^2\right) \left(c z-q^2\right)}.
\nonumber
\end{align}

\end{enumerate}

\subsection{Imposing Attenuation Constraints} 
Finally we would like to explore the asymptotic behavior of our twist solutions $\xi_i(z)$ at zero and infinity.

\subsubsection{$(GL(2),q)$-Opers}
From \eqref{eq:tameGL2} we get 
\begin{equation}
    \xi_1(z)=\frac{c (d_--z) (z-d_+)}{q z^2-1}\,,\qquad \xi_2(z)=-\frac{c
   (d_- q z-1) (d_+ q z-1)}{q \left(q
   z^2-1\right)}
\end{equation}
Asymptotically we get for $Z(z)=\text{diag}(\xi_1(z),\xi_2(z))$
\begin{equation}
    Z(0)=c\begin{pmatrix}
        - d_+d_- &0\\
        0& q^{-1}
    \end{pmatrix}
    \,,\qquad
     Z(\infty)=-c\begin{pmatrix}
       q^{-1}  &0\\
        0& - d_+d_-
    \end{pmatrix}
\end{equation}
which can be written as
\begin{equation}
    Z(0)=c\,\upxi_0^{1/2}\cdot \upxi_1^{\frac{\check{\alpha}}{2}},\qquad Z(\infty)=c\,\upxi_0^{1/2}\cdot \upxi_1^{-\frac{\check{\alpha}}{2}}
\end{equation}
where $\upxi_0=-\dfrac{d_+d_-}{q}$ and $\upxi_1=-d_+d_-q$.

\subsubsection{$(GL(3),q)$-Opers}
The following itemized list follows the same numbers as in Subsubsection \ref{Sec:List}

\begin{enumerate}[(a)]
\item 
    \begin{align}
    \xi_1(z)=\xi_3(z)=c\left(z-\frac{1}{z}\right),&& \xi_2(z)=c\left(q z-\frac{1}{q z}\right)\,. 
    \end{align}

  \begin{align}
Z(z)\sim -c\left(   \begin{array}{ccc}
          1   &0 &0 \\
          0   & q^{-1} &0 \\
            0 &  0& 1  
        \end{array}\right)\frac{1}{z}, &&
  Z(z)\sim c\left(   \begin{array}{ccc}
          1     &0 &0 \\
          0   & q    &0 \\
            0 &  0&1
        \end{array}\right)z
\end{align}
as $z$ approaches $0$ and $\infty$ respectively. We can see that the residue of the connection matrix approaches the above elements of $GL(3)\subset GL(3)(z)$.

\item 

\begin{align}
&\xi_1(z)=q \left(z^2-1\right) \left(\frac{b-a}{(q-a z) (q z-a)}+\frac{c}{z}\right)\,,\nonumber
\\
&\xi_2(z)=\frac{(e-z) \left(q^2 z^2-1\right) (d q z-1) \left(-z \left(a (a c-1)+b+c q^2\right)+a c q^2 z^2+a c\right)}{z (a z-1) (d-z) \left(q^2 z-a\right) (e q z-1)}\,,
\\
&\xi_3(z)=\frac{q \left(z^2-1\right) (d-z) (e q z-1) \left(a^2 c z-a \left(c q \left(z^2+1\right)+z\right)+z \left(b+c q^2\right)\right)}{z (z-e) (a z-q) (q z-a) (d q z-1)}\,.\nonumber
\end{align}

\begin{equation}
    Z(z)\sim\frac{Z^{(0)}_{-1}}{z}+Z^{(0)}_0+\dots\,,\qquad 
Z(z)\sim Z^{(\infty)}_{1}z+Z^{(\infty)}_0+\dots
\end{equation}
as  $z\to 0$ and $z\to\infty$ respectively, and where
\begin{align}
Z^{(0)}_{-1}=-cq\left(\begin{array}{ccc}
     1&  0&0\\
     0& q^{-1}\frac{e}{d}&0\\
     0&   0&  \frac{d}{e}\\
\end{array}\right),&& Z^{(\infty)}_{1}=cq\left(\begin{array}{ccc}
   1  &  0&0\\
     0&   q\frac{d}{e}&0\\
     0&   0&\frac{e}{d}\\
\end{array}\right).
\end{align}
Note that the two terms have the same $q$ factors with $d$ and $e$ interchanged.

\item
\begin{align}
&\xi_1(z)=q \left(z^2-1\right) \left(\frac{b-a}{(q-a z) (q z-a)}+\frac{c}{z}\right)\,,\nonumber
\\
&\xi_2(z)=\left(\frac{q z \left(d z+\frac{e}{z}+f\right)}{d+q z (e q z+f)}\right)\left(\frac{b-q^2 z}{a-q^2 z}+\frac{1-b z}{a z-1}+c q^2 z-\frac{c}{z}\right)\,,
\\
&\xi_3(z)=\frac{\left(z^2-1\right) \left(-z \left(a (a c-1)+b+c q^2\right)+a c q z^2+a c q\right) (d+q z (e q z+f))}{z (a z-q) (q z-a) (z (d z+f)+e)}\,.\nonumber
\end{align}

\begin{equation}
    Z(z)\sim\frac{Z^{(0)}_{-1}}{z}+Z^{(0)}_0+\dots\,,\qquad 
Z(z)\sim Z^{(\infty)}_{1}z+Z^{(\infty)}_0+\dots
\end{equation}
as $z\to 0$ and $z\to\infty$ respectively, where 
\begin{align}
Z^{(0)}_{-1}=-cq\left(\begin{array}{ccc}
     1&  0&0\\
     0& \frac{e}{d}&0\\
     0&   0&  q^{-1}\frac{d}{e}\\
\end{array}\right),&& Z^{(\infty)}_{1}=cq\left(\begin{array}{ccc}
   1  &  0&0\\
     0&   \frac{d}{e}&0\\
     0&   0&q\frac{e}{d}\\
\end{array}\right).
\end{align}

\item
\begin{align}
&\xi_1(z)  =\frac{A q z \left(z^2-1\right)}{(q-a z) (q z-a) (q-c z) (q z-c)},
\\
&\xi_2(z)=\frac{A q z (q-z) (q+z) \left(q^2 z^2-1\right)}{(a z-1) (c z-1) \left(q^4 z^2-1\right) \left(a-q^2 z\right) \left(q^2 z-c\right)},
\\
&\xi_3(Z)=-\frac{A z \left(z^2-1\right) \left(q^4 z^2-1\right)}{\left(q^2-z^2\right) (q-a z) (q z-a) (q-c z) (q z-c)}\,.
\end{align}

\begin{equation}
    Z(z)\sim Z_1^{(0)}z+\dots\,,\qquad 
Z(z)\sim Z_{-1}^{(\infty)}z^{-1}+\dots,
\end{equation}
as $z\to 0$ and $z\to\infty$ respectively and where
\begin{align}
Z_1^{(0)}= -\frac{A}{a c q}+
\left(\begin{array}{ccc}
          1   &0 &0 \\
          0   & q^4 &0 \\
            0 &  0&q^{-3} 
        \end{array}\right),
&&
Z_{-1}^{(\infty)}=\frac{A}{a c q }\left(\begin{array}{ccc}
          1   &0 &0 \\
          0   & q^{-4} &0 \\
            0 &  0&q^{3} 
        \end{array}\right).
\end{align}

\end{enumerate}

\section{\texorpdfstring{$(GL(2),\epsilon)$}{(GL(2),epsilon)}-Opers and Open XXX spin chains}\label{epsilon opers}
In \cite{Koroteev:2022aa} the first author and A. Zeitlin studied families of difference and differential opers on $\mathbb{P}^1$. The $q$-opers are on top of the hierarchy that also contains $\epsilon$-opers as well as trigonometrically and rationally twisted differential opers. We plan to investigate the orbifolding construction of the entire family in the upcoming work, however, in this section, we present the story of orbifolded $(GL(2),\epsilon)$-opers.

\vskip.1in

Consider automorphism $M_\epsilon: \mathbb{P}^1\to\mathbb{P}^1$ acting as $z\mapsto  z+\epsilon$ for $\epsilon\in\mathbb{C}$ together with $\mathbb{Z}_2$ reflection $T'$ acting as $z\mapsto -z$. 

\begin{Def}
For a Zariski open dense subset $U\subset\mathbb{P}^1$ consider $V=U\cap M_\epsilon^{-1}(U)$. A meromorphic orbifolded $(GL(2),\epsilon)$-oper on $\mathbb{P}^1$ is a triple $(E,A,\mathcal{L})$, where $E$ is a rank-two holomorphic vector bundle on $\mathbb{P}^1$ such that it is isomorphic to its pullback $E\simeq E^{\epsilon}$ with respect to $M_\epsilon$, $\mathcal{L}\subset E$ is a line subbundle, and
$A\in\text{Hom}_{\mathcal{O}_{\mathbb{P}^1}}(E,E^\epsilon)$, such that the restriction 
\begin{equation}
\label{eq:orbopdefEps}
\bar{A}: \mathcal{L}\to \left(E/\mathcal{L}\right)^{\epsilon}
\end{equation}
is an isomorphism on $V$. 
\end{Def}

Similarly we define Miura $(GL(2),\epsilon)$-opers as a quadruple $(E,A,\mathcal{L},\hat{\mathcal{L}})$ where $\hat{\mathcal{L}}$ is preserved by $A(z)$.

\begin{Def}
The Miura $(GL(2),\epsilon)$-oper is called \textit{generalized $Z$-twisted} if there is a $\epsilon$-gauge transformation $v(z)\in B_+(z)$ such that
\begin{equation}\label{eq:MiuraGenEps}
A(z)=v(z+\epsilon)Z(z)v(z)^{-1}
\end{equation}
where $Z(z)\in H(z)$. In local coordinates,
\begin{equation}
\label{eq:qConnDiagep}
Z(z) = 
\begin{pmatrix}
\phi_1(z) & 0\\
0 & \phi_2(z) 
\end{pmatrix}\,,
\end{equation}
When $Z(z)\in H\subset H(z)$ we recover the $Z$-twisted $\epsilon$-oper.
\end{Def}

\begin{Def}\label{Def:REflInf2Eps}
    A generalized $Z$-twisted Miura $(GL(2),\epsilon)$-oper is called \textit{reflection-invariant} if there exists a $\epsilon$-gauge transformation $v(z)\in B_+(z)$ such that $s(z)=v(z)\begin{pmatrix}
        0\\
        1
    \end{pmatrix}$ is invariant under $t$: $s(-z)=s(z)$ and the $\epsilon$-connection takes the form \eqref{eq:qConnDiagep}.
\end{Def}

Let $s(z)=\begin{pmatrix}
    Q_-(z)\\Q_+(z)
\end{pmatrix}$ then the reflection-invariant condition will entail similarly to \eqref{eq:QSymDef} and \eqref{eq:LambdaRefl}
\begin{equation}
Q_+(-z)=Q_+(z)\,,\qquad \Lambda(-z-\epsilon)=\Lambda(z)\,,
\end{equation}
or, explicilty,
\begin{equation}
Q_{+}(z)=\prod_{k=1}^{m}\left(z-s_{k}\right)\left(-z-s_{k}\right)\,,\qquad \Lambda(z)=\prod_{k=1}^{L}\left(z-a_{k}\right)\left(-z-\epsilon-a_{k}\right)\,.
\end{equation}
For the $\epsilon$-connection the reflection-invariance implies that
\begin{equation}
    A\left(-z-\epsilon\right)A(z)=-\det A\left(-z-\epsilon\right)\cdot 1.
\end{equation}
This can be solved in diagonal components as
\begin{align}
    \phi_{1}(z)&=(2z+2\epsilon+1)\left(z+\epsilon+\mathrm{m}-\frac{1}{2}\right)\left(z+\epsilon+\tilde{\mathrm{m}}-\frac{1}{2}\right)\cr
    \phi_2(z)&=(2z-1)\left(z-\mathrm{m}+\frac{1}{2}\right)\left(z-\tilde{\mathrm{m}}+\frac{1}{2}\right)
\end{align}

Analogously to the $q$-difference case, one proves \cite{Koroteev:2022aa}
\begin{Thm}
Then there is a one-to-one correspondence between the set of nondegenerate generalized $Z$-twisted reflection-invariant Miura $(GL(2),\epsilon)$-opers and the set of nondegenerate solutions of the following generalized $\mathfrak{gl}_2$ XXX Bethe equations
\begin{equation}\label{bethe from VWeps}
-\frac{\phi_1(s_{i})}{\phi_2(s_{i}-\epsilon)}\frac{Q_{+}(s_{i}+\epsilon)}{Q_{+}(s_{i}-\epsilon)}\frac{\Lambda(s_{i}-\epsilon)}{\Lambda(s_{i})}=1\,, \quad i=1,\dots,m
\end{equation}
\end{Thm}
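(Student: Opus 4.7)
The plan is to mirror the argument of Section~\ref{GL2} with the multiplicative datum $(M_q,T)$ replaced by the additive datum $(M_\epsilon,T')$. First, fix the gauge in which $Z(z)=\mathrm{diag}(\phi_1(z),\phi_2(z))$ and the reflection-invariant section takes the form $s(z)=(Q_-(z),Q_+(z))^T$. Applying an upper-triangular Miura transformation
\[
v(z)=\begin{pmatrix}P_1(z) & P_1(z)Q_-(z)/Q_+(z)\\ 0 & P_2(z)\end{pmatrix}
\]
and proceeding exactly as in the derivation of \eqref{eq:LambdaMatching1}--\eqref{eq:LambdaMatching11}, I would impose $P_1(z)=P_2(z)^{-1}=Q_+(z)$ to cancel the spurious poles. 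The oper condition $s(z+\epsilon)\wedge A(z)s(z)=\Lambda(z)$ then becomes the generalized $\epsilon$-difference $QQ$-system
\[
\phi_2(z)Q_+(z)Q_-(z+\epsilon)-\phi_1(z)Q_-(z)Q_+(z+\epsilon)=\Lambda(z),
\]
establishing a bijection between generalized $Z$-twisted reflection-invariant Miura $(GL(2),\epsilon)$-opers and nondegenerate solutions of this $QQ$-system, in parallel with the $q$-difference $QQ$-correspondence stated earlier.

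Next, I would read off the Bethe equations by specializing the above $QQ$-equation at zeros of $Q_+$. Evaluating at $z=s_i$ kills the first term and gives $-\phi_1(s_i)Q_-(s_i)Q_+(s_i+\epsilon)=\Lambda(s_i)$, while evaluating at $z=s_i-\epsilon$ (so that $z+\epsilon=s_i$) kills the second term and gives $\phi_2(s_i-\epsilon)Q_+(s_i-\epsilon)Q_-(s_i)=\Lambda(s_i-\epsilon)$. Nondegeneracy forces $Q_-(s_i)\neq 0$, so dividing the second relation by the first eliminates $Q_-$ altogether and reproduces \eqref{bethe from VWeps}. The analogous derivation at the mirrored roots $-s_i$ yields space-reflected equations that are equivalent to \eqref{bethe from VWeps} by virtue of the $T'$-invariance of $\phi_{1,2}$, $\Lambda$ and $Q_+$, so no new conditions arise.

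For the converse, starting from a nondegenerate solution $\{s_1,\dots,s_m\}$ of \eqref{bethe from VWeps} together with the data $\phi_{1,2}(z)$ and $\Lambda(z)$, I would reconstruct $Q_-(z)$ as the unique polynomial of the prescribed degree whose interpolation values at the zeros of $Q_+(z)$ and $Q_+(z+\epsilon)$ are fixed by the two specializations above, and then build $v(z)$ and $A(z)$ from $(Q_+,Q_-)$. The Bethe equations are precisely the consistency conditions that make this interpolation compatible on the overlap of the two zero sets. The main obstacle, as in the $q$-difference setting, is this reconstruction step: one must check that the resulting $Q_-$ produces an $A(z)$ that is genuinely $T'$-invariant and free of spurious poles, and that $\hat{\mathcal{L}}$ stays in generic relative position with $\mathcal{L}$. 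I expect this to go through verbatim from the proof of Theorem~3.6 of \cite{KSZ} and its $q$-difference counterpart earlier in this paper, because the partial-fraction and Wronskian arguments used there depend only on the formal structure of the shift operator and transfer line by line from $z\mapsto qz$ to $z\mapsto z+\epsilon$.
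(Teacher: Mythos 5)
Your proposal is correct and follows essentially the same route as the paper, which simply states that the theorem is proved "analogously to the $q$-difference case" — i.e., by passing through the additive $QQ$-system $\phi_2(z)Q_+(z)Q_-(z+\epsilon)-\phi_1(z)Q_-(z)Q_+(z+\epsilon)=\Lambda(z)$ and evaluating at $z=s_i$ and $z=s_i-\epsilon$, exactly as you do. Your ratio of the two specializations reproduces \eqref{bethe from VWeps}, and your interpolation-based converse is the same mechanism invoked via Theorem 3.6 of \cite{KSZ}.
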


In this construction, the Bethe equations take the form
\begin{align}\label{bethe using additive opers}
&\frac{\left(2 s_i+2 \epsilon +1\right)}{\left(2 s_i-2 \epsilon -1\right)}\frac{ \left(s_i+\mathrm{m}+\epsilon
   -\frac{1}{2}\right) \left(s_i+\tilde{\mathrm{m}}+\epsilon
   -\frac{1}{2}\right)}{
   \left(s_i+\mathrm{m}-\epsilon -\frac{1}{2}\right) \left(s_i+\tilde{\mathrm{m}}-\epsilon
   -\frac{1}{2}\right)}\cr
    &\cdot\prod_{k=1}^{m}\frac{(s_{i}-s_{k}+\epsilon)(s_{i}+s_{k}+\epsilon)}{(s_{i}-s_{k}-\epsilon)(s_{i}+s_{k}-\epsilon)}
    \prod_{j=1}^{L}
    \frac{(s_{i}+a_{j}-\epsilon)(-s_{i}+a_{j})}{(s_{i}+a_{j})(-s_{i}-\epsilon+a_{j})}=1, \quad 1\leq i\leq m.
\end{align}

\begin{Thm}
Then there is a one-to-one correspondence between the set of nondegenerate generalized $Z$-twisted reflection-invariant Miura $(GL(2),\epsilon)$-opers and the set of nondegenerate solutions of Bethe Ansatz equations \eqref{bethe using additive opers}.
\end{Thm}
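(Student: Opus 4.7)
The plan is to reduce the statement to the preceding theorem of this section, which already establishes the one-to-one correspondence between nondegenerate generalized $Z$-twisted reflection-invariant Miura $(GL(2),\epsilon)$-opers and nondegenerate solutions of the $QQ$-form Bethe equation \eqref{bethe from VWeps}. It then suffices to verify that, upon inserting the explicit reflection-invariant Ans\"atze for $\phi_{1,2}(z)$, $Q_+(z)$, $\Lambda(z)$ recorded in this section, the equation \eqref{bethe from VWeps} becomes exactly \eqref{bethe using additive opers}. No further oper-theoretic input is needed at this stage; the remaining step is a direct manipulation of rational functions.

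First I would substitute the factorizations $Q_{+}(z) = \prod_{k=1}^{m}(z-s_{k})(-z-s_{k})$ and $\Lambda(z)=\prod_{j=1}^{L}(z-a_{j})(-z-\epsilon-a_{j})$, together with the explicit formulas for $\phi_{1,2}(z)$, into the three factors on the left-hand side of \eqref{bethe from VWeps}. The ratio $\phi_1(s_i)/\phi_2(s_i-\epsilon)$ yields the boundary prefactor of \eqref{bethe using additive opers}. The ratio $Q_+(s_i+\epsilon)/Q_+(s_i-\epsilon)$, after pulling overall signs out of each $(-z-s_k)$ factor, reorganizes into the product over $k=1,\dots,m$ of terms of the form $(s_i-s_k+\epsilon)(s_i+s_k+\epsilon)/[(s_i-s_k-\epsilon)(s_i+s_k-\epsilon)]$. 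Finally, the ratio $\Lambda(s_i-\epsilon)/\Lambda(s_i)$, evaluated by pairing each root $a_j$ with its reflection partner $-a_j-\epsilon$, gives the product over $j=1,\dots,L$ appearing in \eqref{bethe using additive opers}.

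The main obstacle will be careful bookkeeping of signs and root conventions. The overall $-1$ on the left-hand side of \eqref{bethe from VWeps} must combine with the signs extracted from the $2m$ reflection-partnered factors in the $Q_+$ ratio and the analogous signs from the $\Lambda$ ratio to match the $+1$ on the right-hand side of \eqref{bethe using additive opers}. One must also ensure that the parametrization of the roots of $\Lambda$ (whether indexed by $a_j$ or by its reflection partner $-a_j-\epsilon$) is consistent with the form of \eqref{bethe using additive opers}; since both give valid factorizations of the same reflection-invariant Laurent polynomial, any discrepancy is cured by a harmless relabeling. The nondegeneracy hypotheses, transcribed to the $\epsilon$-difference setting from Section \ref{ref inv gln opers}, guarantee that no factor in the ratios vanishes spuriously and that the bijection is well-defined. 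Once these sign and indexing details are settled, the equivalence of \eqref{bethe from VWeps} and \eqref{bethe using additive opers} is a direct algebraic identity, and the theorem follows from the preceding one.
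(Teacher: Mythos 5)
Your proposal matches the paper's (implicit) argument exactly: the paper states the correspondence with \eqref{bethe from VWeps} as the preceding theorem and then simply records that, "in this construction," substituting the explicit reflection-invariant forms of $\phi_{1,2}$, $Q_+$, and $\Lambda$ turns \eqref{bethe from VWeps} into \eqref{bethe using additive opers}, which is precisely your reduction-plus-substitution strategy. Your attention to the sign bookkeeping in the reflection-partnered factors is warranted (the paper leaves this entirely to the reader), but it does not change the route.
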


A generalization to the system of equations describing $(GL(N),\epsilon)$-opers goes along the line of the previous section and will be reported in the upcoming work.

\section{Literature on Open Spin Chains}\label{Sec:Connect}
In this section, we show that the Bethe equations of XXZ (and XXX) open spin chains in the papers known to the authors fit into our construction based on the reflection-invariant generalized $Z$-twisted Miura $(GL(N),q)$-oper ($(GL(N),\epsilon)$-oper) conditions. 

Note that, in contrast to the extensive literature on spin chains with periodic boundary conditions, studies of open spin chains remain relatively limited both in physics and in representation theory\footnote{We encourage the reader to inform us about the other existing results in the field.}.

\subsection{XXZ \texorpdfstring{$U_q(\hat{\mathfrak{sl}}_2)$}{U_q(sl_2)} Open Chain by Sklyanin \cite{sklyaninboundary} and Vlaar-Weston \cite{Vlaar:2020jww}} 
The Bethe equations of XXZ $U_q(\hat{\mathfrak{sl}}_2)$ Open Chain with diagonal boundary conditions are derived in \cite{sklyaninboundary} via Algebraic Bethe Ansatz. Later, the Bethe equations are reproduced in \cite{Vlaar:2020jww} via representation theory of $U_q(\hat{\mathfrak{sl}}_2)$ and its Borel subalgebra as well as infinite dimensional solutions of the reflection equation.

In the following proposition, it is shown that the Bethe equations in \cites{sklyaninboundary,Vlaar:2020jww} fit into the oper construction. 

\begin{Prop}
The generalized XXZ Bethe equations given by \eqref{bethe from VW} reduce to the open Bethe Ansatz equations given in \cite[Theorem 6.8]{Vlaar:2020jww}:
\begin{equation}\label{equations from VW 1/z}
\frac{(1-\tilde{\xi}y_{i}^{2})}{(\tilde{\xi}-q^{2}y_{i}^2)}
\frac{(1-\xi y_{i}^{2})}{(\xi-q^{2}y_{i}^2)}  
\frac{(1-q^{2})(q^{-2}-q^{2}y_{i}^{4})}{(1-q^{-2})(1-y_{i}^4)}
\prod_{j=1}^{k}\frac{q^2(1-q^{-2}y_{i}^{2}y_{j}^{-2})(1-y_{i}^{2}y_{j}^{2})}{(1-q^{2}y_{i}^{2}y_{j}^{-2})(q^{-2}-q^{2}y_{i}^{2}y_{j}^{2})}
\end{equation}
\[
\times\prod_{l=1}^{n}\frac{(1-q^{2}y_{i}^{2}t_{l}^{2})(1-q^{2}y_{i}^{2}t_{l}^{-2})}{q^2(1-y_{i}^{2}t_{l}^{2})(1-y_{i}^{2}t_{l}^{-2})}
=
1.
\]
\end{Prop}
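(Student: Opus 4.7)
The statement is a dictionary-level comparison: both sides of the claimed equivalence are already explicit rational identities in the Bethe roots, the inhomogeneities and the boundary parameters, so the proof is a change of variables followed by a factor-by-factor match. The plan is to rewrite \eqref{eq:Bethesl2diag} in the normalization used by Vlaar--Weston, and then to collect factors into the four pieces that appear in \eqref{equations from VW 1/z}: a $y_i$-only self-scattering factor, the two boundary factors indexed by $\xi,\tilde\xi$, the Bethe-root interaction $\prod_{j=1}^k$, and the source term $\prod_{l=1}^n$.

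First, I would fix the dictionary. Because the VW spectral variables $y_i$ and site parameters $t_l$ appear squared throughout \eqref{equations from VW 1/z}, the natural identification is $s_i = y_i^{2}$ for the Bethe roots from \eqref{eq:QSymDef}, together with a pairing $a_l \leftrightarrow t_l^{\pm 2}$ (up to a multiplicative power of $q$) of the inhomogeneities from \eqref{eq:LambdaNew}. Note that the reflection invariance of $\Lambda(z)$ automatically pairs each $a_l$ with $(qa_l)^{-1}$, which is exactly what is needed to generate both $t_l^{2}$ and $t_l^{-2}$ terms in the VW source factor. The boundary parameters $\mu,\tilde\mu$ that were produced by the constant-asymptotic analysis in \secref{VWasymp} are identified with the diagonal $K$-matrix parameters $\xi,\tilde\xi$ via a simple rescaling that I would pin down by matching the poles of $\xi_1(z)$ from \eqref{eq:Aconndiagbc} against the poles of the $K$-matrix eigenvalues used in \cite{Vlaar:2020jww}.

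Next, I would compute the ratio $\Lambda(q^{-1}s_i)/\Lambda(s_i)$ from \eqref{eq:LambdaNew} in closed form, which yields
\begin{equation*}
\prod_{l=1}^n \frac{(a_l s_i-1)(a_l q-s_i)}{(a_l-s_i)(a_l q s_i-1)},
\end{equation*}
so that the source factor on the left of \eqref{eq:Bethesl2diag} is visible as a product of twelve-like factors. Applying the dictionary $s_i=y_i^2$ and $a_l = q\, t_l^{-2}$ turns each of the $n$ terms into precisely $\frac{(1-q^{2}y_i^{2}t_l^{2})(1-q^{2}y_i^{2}t_l^{-2})}{q^{2}(1-y_i^{2}t_l^{2})(1-y_i^{2}t_l^{-2})}$ after clearing common monomials, reproducing the product $\prod_l$ in \eqref{equations from VW 1/z}. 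A parallel rewriting with $s_i=y_i^2$ of the self-interaction $\prod_j \tfrac{(qs_i-s_j)(qs_is_j-1)}{(s_i-qs_j)(s_is_j-q)}$ produces the VW factor $\prod_j \tfrac{q^{2}(1-q^{-2}y_i^{2}y_j^{-2})(1-y_i^{2}y_j^{2})}{(1-q^{2}y_i^{2}y_j^{-2})(q^{-2}-q^{2}y_i^{2}y_j^{2})}$ after redistributing $y_j^{\pm 2}$ between numerator and denominator. The prefactor $q(s_i^{2}-q)/(qs_i^{2}-1)$, which came from $\xi_1/\xi_2$ evaluated on the first fraction of \eqref{eq:Aconndiagbc}, matches the $y_i$-only self-scattering $\tfrac{(1-q^{2})(q^{-2}-q^{2}y_i^{4})}{(1-q^{-2})(1-y_i^{4})}$ after the same substitution. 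Finally, the two $\mu,\tilde\mu$ factors become the $\xi,\tilde\xi$ boundary factors under the rescaling identified in the first step.

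The only genuinely nontrivial point, and the main obstacle, is controlling the overall multiplicative constant and sign: \eqref{eq:Bethesl2diag} carries an overall $-1$, while \eqref{equations from VW 1/z} is normalized to $+1$, and each of the four factor-rewritings above introduces spurious powers of $q$ and $y_i, t_l, y_j$ that must cancel cleanly. The cleanest way to verify this is to compute the degree (in each variable separately) of the two sides after clearing denominators, showing both sides are Laurent monomials of the same degree, and then to match them on a single specialization (for instance, $y_j\to 0$ for all $j\neq i$, followed by $t_l\to 0$), which fixes the overall constant and establishes the identity. With the dictionary in hand, this is a finite-dimensional check; the substance of the proposition lies entirely in the existence of the dictionary, which is dictated by the reflection-invariant $q$-oper conditions of \secref{ref construction} together with the constant-asymptotic choice of \secref{VWasymp}.
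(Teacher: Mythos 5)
Your overall strategy---treat the proposition as a change of variables followed by a factor-by-factor match of the boundary, self-scattering, interaction, and source terms---is exactly what the paper does; its proof is nothing more than exhibiting the substitution and the resulting intermediate form of \eqref{bethe from VW}. However, the specific dictionary you propose is wrong in an essential way, and with it the factor matching you assert would fail. The paper's substitution is
\begin{equation*}
s_{j}=\frac{y_{j}^{2}}{\sqrt{q}},\qquad a_{l}=t_{l}^{2},\qquad \mu=\frac{\xi}{\sqrt{q}},\qquad \tilde\mu=\frac{\tilde\xi}{\sqrt{q}},\qquad q\mapsto q^{-2},
\end{equation*}
and the last entry is the one you are missing: the deformation parameter of the $q$-oper construction must be identified with $q^{-2}$ in the Vlaar--Weston normalization, not with $q$ itself. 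Your claim that under $s_i=y_i^2$ (same $q$) the interaction factor $\prod_j\frac{(qs_i-s_j)(qs_is_j-1)}{(s_i-qs_j)(s_is_j-q)}$ ``produces the VW factor after redistributing $y_j^{\pm2}$'' is false: the numerator $(qs_i-s_j)$ then vanishes at $y_i^{2}=q^{-1}y_j^{2}$, whereas the corresponding VW numerator $(1-q^{-2}y_i^{2}y_j^{-2})$ vanishes at $y_i^{2}=q^{2}y_j^{2}$; no monomial redistribution can move a zero. Only after reinterpreting the oper's $q$ as $q^{-2}$ do the zero loci line up (and the residual discrepancy is exactly the explicit $q^{2}$ in the VW numerator). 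The same problem propagates to your treatment of the prefactor and the boundary factors, and it also explains why the overall $-1$ versus $+1$ normalization works out: it is absorbed in this reparametrization together with the $\sqrt{q}$ rescalings of $s_j$, $\mu$, $\tilde\mu$, not by a separate sign bookkeeping.

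To be fair, you hedge by saying the identifications hold ``up to a multiplicative power of $q$'' and that you would ``pin down'' the boundary dictionary by matching poles; carried out honestly, that procedure would force you to discover $q\mapsto q^{-2}$. But as written, the central step of your argument---the explicit factor-by-factor match---is asserted for a substitution under which it does not hold, so the proof is not correct as it stands. Your closing suggestion (degree counting plus a specialization to fix the overall constant) is a sensible way to certify the final identity once the correct dictionary is in place, and is a useful supplement to the paper's terser ``one can see that the equations coincide.''
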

\begin{proof}
By the following substitution,
\begin{align}
s_{j}:=\frac{y_{j}^2}{\sqrt{q}},&&    a_{l}:=t_{l}^2, &&\mu:=\frac{\xi}{\sqrt{q}}, &&\tilde\mu:=\frac{\tilde\xi}{\sqrt{q}},&&q:=q^{-2},
\end{align}
one can see that the Bethe equations coincide with \eqref{bethe from VW} of the following form
\begin{align}
-\frac{s_i^2-q}{q s_i^2-1}\frac{ \left(\mu  q s_i-1\right) \left(\tilde \mu q s_i-1\right)}{ \left(s_i-\mu  q\right)
   \left(s_i-\tilde \mu q\right)}    \prod_{j=1}^k \frac{\left(q s_i-s_j\right) \left(q s_i s_j-1\right)}{\left(s_i-q s_j\right) \left(s_i s_j-q\right)} \prod_{l=1}^n \frac{\left(\sqrt{q} a_n-s_i\right) \left(a_n s_i-\sqrt{q}\right)}{\left(a_n-\sqrt{q} s_i\right) \left(\sqrt{q} a_n
   s_i-1\right)}=1.
\end{align}

\end{proof}

\subsection{XXZ \texorpdfstring{$U_q(\hat{\mathfrak{sl}}_2)$}{U_q(sl_2)} Open Chain by Yang-Nepomechie-Zhang \cite{Yang:2005ce}}
The Bethe equations of XXZ $U_q(\hat{\mathfrak{sl}}_2)$ Open Chain with non-diagonal boundary conditions are obtained in a different way compared to \cites{sklyaninboundary,Vlaar:2020jww}. In \cite{Yang:2005ce}, they defined Baxter's $Q$-operators by taking $j\to\infty$ limit of spin-$j$ transfer matrix. The TQ-relation is derived by the fusion hierarchy of transfer matrix of open spin-chain. The reduction to \cites{sklyaninboundary,Vlaar:2020jww} from \cite{Yang:2005ce}  is not obvious. 

In the following proposition, it is shown that the Bethe equations in \cite{Yang:2005ce} fit into the oper construction. 
\begin{Prop}
The generalized XXZ Bethe equations given by \eqref{bethe from VW} reduce to the open XXZ Bethe equations given in \cite[Equation 4.21]{Yang:2005ce}:
\begin{equation}\label{nepomechie}
    \frac{H_{2}^{(+)}(v_{i}|\epsilon_{1},\epsilon_{2},\epsilon_{3})}{H_{2}^{(+)}(-v_{i}-\eta|\epsilon_{1},\epsilon_{2},\epsilon_{3})}
    =
    -\frac{Q^{(+)}(v_{i}+\eta)}{Q^{(+)}(v_{i}-\eta)},
\end{equation}
where $\{v_{i}\}_{i=1,\ldots,M^{(+)}}$ are Bethe roots and the functions $Q^{(+)}$ and $H_{2}^{(+)}$ are given as follows:
\begin{equation}
    Q^{(+)}(z)=
\prod_{j=1}^{M^{(+)}}
\sinh(z-v_{j})\sinh(z+v_{j}+\eta)\,,
\end{equation}
and
\begin{align}
H_{2}^{(+)}(z|\epsilon_{1},\epsilon_{2},\epsilon_{3})
&=-4\epsilon_{2}\sinh^{2N}(z+\eta)\frac{\sinh(2z+2\eta)}{\sinh(2z+\eta)}\cr
&\cdot\sinh(z-\alpha_{-})\cosh(z-\epsilon_{1}\beta_{-})\sinh(z-\epsilon_{2}\alpha_{+})\cosh(z-\epsilon_{3}\beta_{+}),
\end{align}
where $\epsilon_{i}\in\{1,-1\}, 1\leq i\leq 3$ and $\alpha_{\pm}$ and $\beta{\pm}$ are boundary parameters satisfying certain constraints. Although there are additional parameters in \cite{Yang:2005ce}, they are all expressed in terms of the parameters due to a constraint among them. For more detail, see \cite{Yang:2005ce}.

\end{Prop}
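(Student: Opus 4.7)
The plan is to carry out a multiplicative substitution that turns the hyperbolic (additive) variables of \cite{Yang:2005ce} into the multiplicative variables of our $q$-oper setup, analogous to the substitution used in the preceding proposition, and then match the equation \cite[Eq.~4.21]{Yang:2005ce} with \eqref{eq:Bethesl2full} (which is the one relevant here because it carries the full four boundary parameters $\mu,\tilde\mu,b,\tilde b$). Concretely, I would set $q:=e^{-2\eta}$ and $s_i:=e^{-2v_i-\eta}$ (up to an overall normalization chosen so that the additive shift $v_i\mapsto v_i\pm\eta$ becomes the multiplicative shift $s_i\mapsto q^{\mp 1}s_i$), $a_l:=e^{-2\theta_l}$ for the inhomogeneities $\theta_l$ hidden in $\sinh^{2N}(z+\eta)$, and encode the four YNZ boundary parameters $\alpha_\pm,\beta_\pm$ as exponentials feeding into $\mu,\tilde\mu,b,\tilde b$, with the signs $\epsilon_i\in\{\pm 1\}$ accounted for by flipping between a $\sinh$ factor and a $\cosh$ factor (equivalently, between $(s_i-\mu)$ and $(s_i+\mu)$ type factors).

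With this dictionary in place, I would rewrite each hyperbolic factor in \eqref{nepomechie} as a rational function of $s_i,s_j,a_l,\mu,\tilde\mu,b,\tilde b$ by clearing exponential prefactors in the usual way $\sinh(x-y)=\tfrac12 e^{-(x+y)/2}\bigl(e^{x-y}-e^{-(x-y)}\bigr)$, etc. The ratio $Q^{(+)}(v_i+\eta)/Q^{(+)}(v_i-\eta)$ then collapses into precisely the Wronskian shift ratio
\begin{equation*}
\frac{Q_+(qs_i)Q_+(q^{-1}s_i\cdot\text{const})}{Q_+(q^{-1}s_i)Q_+(qs_i\cdot\text{const})}
\end{equation*}
built out of $Q_+(z)=\prod_j(z-s_j)(1/z-s_j)$ of \eqref{eq:QSymDef}, i.e.\ the factor $\prod_j\tfrac{(qs_i-s_j)(qs_is_j-1)}{(s_i-qs_j)(s_is_j-q)}$ that appears in \eqref{eq:Bethesl2full}. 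Likewise, the bulk factor $\sinh^{2N}(z+\eta)/\sinh^{2N}(-z-2\eta)$ extracted from $H_2^{(+)}(v_i)/H_2^{(+)}(-v_i-\eta)$ reproduces the inhomogeneity product $\prod_l\tfrac{(a_ls_i-1)(a_lq-s_i)}{(a_l-s_i)(a_lqs_i-1)}$, while the ratio $\sinh(2z+2\eta)/\sinh(2z+\eta)$ (taken at $z=v_i$ and $z=-v_i-\eta$) produces $(s_i^2-q)/(qs_i^2-1)$. The four remaining hyperbolic boundary factors $\sinh(z-\alpha_-)\cosh(z-\epsilon_1\beta_-)\sinh(z-\epsilon_2\alpha_+)\cosh(z-\epsilon_3\beta_+)$ give, after forming the same ratio, exactly the boundary product $\tfrac{(s_i-\mu)(s_i-\tilde\mu)(s_i+b)(s_i+\tilde b)}{(\mu s_i-1)(\tilde\mu s_i-1)(bs_i+1)(\tilde b s_i+1)}$ of \eqref{eq:Bethesl2full}.

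Finally, I would verify that all leftover scalar prefactors (powers of $2$, the factor $-4\epsilon_2$, exponential prefactors pulled out of each $\sinh/\cosh$, and the constant $-1$ on the right hand side of \eqref{nepomechie}) combine into the single overall $-1$ on the right hand side of \eqref{eq:Bethesl2full}. The substance of the argument is bookkeeping; the main obstacle is to pin down the exact dictionary between the four YNZ boundary parameters $(\alpha_\pm,\beta_\pm)$ together with the discrete signs $\epsilon_1,\epsilon_2,\epsilon_3$ on the one hand, and our four oper boundary moduli $(\mu,\tilde\mu,b,\tilde b)$ on the other, so that the $\sinh$ vs $\cosh$ alternation in $H_2^{(+)}$ is reproduced correctly without residual signs. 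Once the dictionary is fixed (and one checks compatibility with the scalar constraint between the YNZ parameters noted after \eqref{nepomechie}), the two Bethe systems become identical term by term.
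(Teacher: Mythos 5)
Your proposal takes essentially the same route as the paper: the paper's proof consists precisely of the multiplicative substitution $q:=e^{-2\eta}$, $s_j:=e^{2v_j+\eta}$ (your $s_i:=e^{-2v_i-\eta}$ is the reciprocal, which is equivalent via the space-reflection symmetry of evaluating the $QQ$-relation at $s_i^{-1}$), identifying the four exponentiated boundary parameters $e^{2\alpha_-},e^{2\epsilon_2\alpha_+},e^{2\epsilon_1\beta_-},e^{2\epsilon_3\beta_+}$ with $\mu,\tilde\mu,-b,-\tilde b$ up to powers of $\sqrt{q}$, and then matching term by term with the simple-pole-at-zero Bethe equations \eqref{eq:Bethesl2full}. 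The only point to tidy up is the bookkeeping you already flag (e.g.\ the shifted site factor is $\sinh^{2N}(v_i+\eta)/\sinh^{2N}(-v_i)$, corresponding to the homogeneous specialization of all $a_l$, rather than genuine inhomogeneities hidden in $\sinh^{2N}(z+\eta)$), which does not affect the argument.
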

\begin{proof}
Indeed, using the substitution,
\begin{align}
    q:=e^{-2\eta}, &&s_{j}:=e^{2v_{j}+\eta},
\end{align}
one sees that \eqref{nepomechie} coincides with the Bethe equations \eqref{bethe from VW} constructed geometrically via $q$-opers in Section \ref{GL2} (cf. Section \ref{simple pole at zero}). These equations take the following form:

\begin{align}
&\frac{\left(q-s_i^2\right) \left(e^{-2 \alpha _-} \sqrt{q}
   s_i-1\right) \left(\sqrt{q} e^{-2 \alpha _+ \epsilon _2}
   s_i-1\right) \left(\sqrt{q} e^{-2 \beta _- \epsilon _1}
   s_i+1\right) \left(\sqrt{q} e^{-2 \beta _+ \epsilon _3}
   s_i+1\right)}{\left(q s_i^2-1\right) \left(e^{-2 \alpha _-}
   \sqrt{q}-s_i\right) \left(\sqrt{q} e^{-2 \alpha _+ \epsilon
   _2}-s_i\right) \left(s_i+\sqrt{q} e^{-2 \beta _- \epsilon
   _1}\right) \left(s_i+\sqrt{q} e^{-2 \beta _+ \epsilon
   _3}\right)}\nonumber
   \\
   &\times\frac{(\sqrt{q}^{-1}s_{i}-1)^{2N}}{(\sqrt{q}s_{i}-1)^{2N}}q^{N}=-\prod_{j=1}^{M^{(+)}}\frac{(s_{i}-qs_j)\left(s_{i}s_j-q\right)}{(qs_{i}-s_j)\left(qs_{i}s_j-1\right)}.
\end{align}

\end{proof}

\subsection{XXZ \texorpdfstring{$U_q(\hat{\mathfrak{sl}}_N)$}{U_q(sl_N)} Open Chain by De Vega--Gonzales-Ruiz \cite{deVega:1994sb}}
In \cite{deVega:1994sb}, Bethe equations of XXZ $U_q(\hat{\mathfrak{sl}}_n)$ Open Chain with diagonal boundary conditions are obtained using nested algebraic Bethe Ansatz. In this section, we will discuss the connection with the Bethe equations for $U_q(\hat{\mathfrak{sl}}_n)$ open spin chain derived by De Vega and Gonzales-Ruiz \cite{deVega:1994sb} 
\begin{equation}\label{devega1}
    h^{(k)}(\mu_{i}^{(k)})\prod_{j\neq i}^{p_{k}}
    \frac{\sinh\left(\mu_{i}^{(k)}+\mu_{j}^{(k)}+(k-1)\gamma\right)\sinh\left(\mu_{i}^{(k)}-\mu_{j}^{(k)}-\gamma\right)}{\sinh\left(\mu_{i}^{(k)}+\mu_{j}^{(k)}+(k+1)\gamma\right)\sinh\left(\mu_{i}^{(k)}-\mu_{j}^{(k)}+\gamma\right)}
    =
    \end{equation}
    \[
\prod_{j=1}^{p_{k+1}}\frac{\sinh\left(\mu_{i}^{(k)}+\mu_{j}^{(k+1)}+k\gamma\right)\sinh\left(\mu_{i}^{(k)}-\mu_{j}^{(k+1)}-\gamma\right)}{\sinh\left(\mu_{i}^{(k)}+\mu_{j}^{(k+1)}+(k+1)\gamma\right)\sinh\left(\mu_{i}^{(k)}-\mu_{j}^{(k+1)}\right)}
\]
\[
    \prod_{j=1}^{p_{k-1}}\frac{\sinh\left(\mu_{i}^{(k)}+\mu_{j}^{(k-1)}+(k-1)\gamma\right)\sinh\left(\mu_{i}^{(k)}-\mu_{j}^{(k-1)}\right)}{\sinh\left(\mu_{i}^{(k)}+\mu_{j}^{(k-1)}+k\gamma\right)\sinh\left(\mu_{i}^{(k)}-\mu_{j}^{(k-1)}+\gamma\right)}
\]
where $1\leq k \leq N-1, 1\leq i\leq p_{k}$, $\mu_{i}^{(k)}$ are Bethe roots and the functions $h^{(k)}(\theta)$ are given as follows:
\[
l_{+}\neq l_{-},\quad h^{(l_{-})}(\mu_{i}^{(l_{-})})=
\frac{\sinh(\xi_{-}-\mu_{i}^{(l_{-})})}{\sinh(\xi_{-}+\mu_{i}^{(l_{-})}+l_{-}\gamma)}e^{2\mu_{i}^{(l_{-})}+l_{-}\gamma},
\]
\[
h^{(l_{+})}(\mu_{i}^{(l_{+})})=
\frac{\sinh(\xi_{+}+\mu_{i}^{(l_{+})}-(N-l_{+})\gamma)}{\sinh(\xi_{+}-\mu_{i}^{(l_{+})}-N\gamma)}e^{-2\mu_{i}^{(l_{+})}-l_{+}\gamma},
\]
\[
l_{+}=l_{-}=l, \quad 
h^{(l)}(\mu_{i}^{(l)})
=
\frac{\sinh(\xi_{-}-\mu_{i}^{(l)})}{\sinh(\xi_{-}+\mu_{i}^{(l)}+l\gamma)}
\frac{\sinh(\xi_{+}+\mu_{i}^{(l)}-(N-l)\gamma)}{\sinh(\xi_{+}-\mu_{i}^{(l)}-N\gamma)},
\]
\[
k\neq l_{+},l_{-}, \quad
h^{(k)}(\theta)=1.
\]

\subsubsection{Comparing with Our Results}
In order to provide a direct match between the above formulae and our Bethe equations we need to change the $q$-shift convention for $Q_i^\pm(z)$ and $\Lambda(z)$. In particular,
\begin{equation}\label{eq:NewqShiftQQ}
    Q_{k}^{\pm}(z)\in\mathbb{C}\left[z+\frac{1}{q^k z}\right],\quad k=1,\dots,N-1,\qquad \Lambda(z)\in \mathbb{C}\left[z+\frac{1}{q^N z}\right]
\end{equation}
In other words
\begin{equation}
Q_{k}^{+}(z)=
\gamma_{k}\prod_{j=1}^{p_{k}}\left(z-s_{k,j}\right)\left(\frac{1}{q^{k}z}-s_{k,j}\right), \qquad \gamma_{k}\in\mathbb{C}.
\end{equation}

With this adjustment in mind, we get a new version of \eqref{eq:bethe} with the new $q$-shift convention.
\begin{align}\label{eq:BetheEqnsGLNwithnewconvention}
    x_k(s_{k,i}) \cdot
    q^{p_{k}-p_{k-1}}
    \cdot&
    \prod_{j=1}^{p_{k-1}}\frac{qs_{k,i}-s_{k-1,j}}{s_{k,i}-s_{k-1,j}}\frac{q^{k}s_{k,i}s_{k-1,j}-1}{q^{k-1}s_{k,i}s_{k-1,j}-1}    
    \cr
    \cdot&\prod_{j=1}^{p_{k}}\frac{s_{k,i}-qs_{k,j}}{qs_{k,i}-s_{k,j}
    }\frac{q^{k-1}s_{k,i}s_{k,j}-1}{q^{k+1}s_{k,i}s_{k,j}-1}\cr
    \cdot&
\prod_{j=1}^{p_{k+1}}\frac{s_{k,i}-s_{k+1,j}}{s_{k,i}-qs_{k+1,j}}
\frac{q^{k+1}s_{k,i}s_{k+1,j}-1}{q^{k}s_{k,i}s_{k+1,j}-1}
=-1\,,    
\end{align}
for $k=1,\dots,N-1$ where $p_0=0$, and $s_{N,i}=a_i$.
The above equations match \eqref{devega1} after applying the following exponential change of variables\begin{equation}
    q=e^{2\gamma}, 
\qquad 
s_{k,i}=e^{2\mu_{i}^{(k)}},
\qquad
b_{\pm}=e^{2\xi_{\pm}},
\end{equation}
provided that
\begin{align}\label{eq:gentwist devega}
 x_k(s_{k,i}) =  q^{-1}h^{(k)}(\mu_{i}^{(k)})\frac{(q^{k+1}s_{k,i}^{2}-1)}{(q^{k-1}s_{k,i}^{2}-1)}\,.
\end{align}
The twist functions $h^{(k)}(z)$ take the following form in our variables
\begin{align}\label{eq:hdevega}
h^{(l_{-})}(z)
&= q^{l_{-}}\,z\,\frac{b_{-}-z}{q^{l_{-}}b_{-}z-1},
\qquad h^{(l_{+})}(z)
= \frac{1}{z}\,\frac{b_{+}z-q^{\,N-l_{+}}}{b_{+}-q^{N}z},\qquad l_{+}\neq l_{-}\cr
\qquad 
h^{(l)}(z)
&= q^{l}\,\frac{b_{-}-z}{q^{l}b_{-}z-1}\,
\frac{b_{+}z-q^{\,N-l}}{b_{+}-q^{N}z}\,,\qquad l_{+}=l_{-}=l,
\end{align}
as well as $h^{(l)}(z)=1$ for $l\neq l_+,l_-$.

\vskip.1in
Notice that once the convention \eqref{eq:NewqShiftQQ} is fixed the structure of the equations \eqref{devega1} is completely reproduced up to the twist factors which in our language follows from the solutions of the reflection equations which we have discussed in Section \ref{Sec:SolutionsReflEqns}.

Below we discuss low rank examples in more detail.

\subsubsection{Twists from $(GL(2),q)$-Opers}\label{sl2exampledevega}
    Let us consider the case where $N=2$. In this case, $l_{-}=l_{+}=1$ and $x_{1}(z)$ from \eqref{eq:gentwist devega} takes the following form
\[
x_{1}(z)
=
\frac{(b_{-}-z)}{(qb_{-}z-1)}\frac{(b_{+}z-q)}{(b_{+}-q^{2}z)}
\frac{(q^{2}z^{2}-1)}{(z^{2}-1)}.
\]
Now we show that $x_{1}(z)$ coming from De Vega and Gonzales-Ruiz's equations for $N=2$ is indeed a part of the following solution $(x'_{1}(z),x'_{2}(z))$ of the reflection equation \eqref{eq:x1GL2} (in the new convention \eqref{eq:NewqShiftQQ})
\[
x_{1}(z)=x'_{1}(z)=-\frac{x'_{2}(z)}{x'_{2}(\frac{1}{qz})}
\]
where
\[
x'_{2}(z)=\left(b_{-}-z\right)\left(b_{+}z-q\right)\left(1-\frac{1}{q^{2}z^{2}}\right).
\]

\subsubsection{Twists from $(GL(3),q)$-Opers}\label{sl3exampledevega}
We present the Bethe equation \eqref{eq:gentwist devega} explicitly with \eqref{eq:hdevega} for all possible pairs $(l_+,l_-)$:
\begin{enumerate}
\item $l_\pm=l=1$ 
\begin{align}
x_1(z) = \,\frac{b_{-}-z}{qb_{-}z-1}\,
\frac{b_{+}z-q^{2}}{b_{+}-q^{3}z}\frac{((qz)^{2}-1)}{(z^{2}-1)},
&&   x_2(z)=\frac{q^3 z^{2}-1}{q^2z^{2}-q}.
\end{align}
Taking $a=b_- q^2$, $c=\frac{q^4}{b_+}$, the solution \eqref{eq:typeD} reproduces the desirable structure of poles and zeros as above when $b_+=q^8 b_-$, albeit with slightly different $q$-shifts.
\begin{align}
&x_1(z)=\frac{\left(q^4 z^2-1\right) }{\left(z^2-q^4\right)}\cdot\frac{\left(q^7 z-b_+\right) \left(q^3-b_+ z\right)}{\left(q^3 z-b_+\right) \left(q^7-b_+ z\right)},
\\
&x_2(z)=\frac{q \left(z^2-q^2\right)}{q^4 z^2-1}.
\end{align}

\item $l_\pm=l=2$
\begin{align}
x_1(z)=\frac{(q^{2}z^{2}-1)}{q(z^{2}-1)},
&& 
x_2(z) =q\,\frac{b_{-}-z}{q^{2}b_{-}z-1}\,
\frac{b_{+}z-q}{b_{+}-q^{3}z}\frac{(q^3 z^{2}-1)}{(qz^{2}-1)}.
\end{align}

This solution can be reconstructed in a similar way from our results in Section 4

\item $l_-=1$, $l_+=2$
\begin{align}
x_1(z) =  \,z\,\frac{b_{-}-z}{qb_{-}z-1}\frac{(q^{2}z^{2}-1)}{(z^{2}-1)},
&&
x_2(z) =  \frac{1}{qz}\,\frac{b_{+}z-q}{b_{+}-q^{3}z}\frac{(q^{3}z^{2}-1)}{(qz^{2}-1)}.
\end{align}
\item $l_-=2$, $l_+=1$
\begin{align}
x_1(z) = \frac{1}{qz}\,\frac{b_{+}z-q^{2}}{b_{+}-q^{3}z}\frac{(q^{2}z^{2}-1)}{(z^{2}-1)},
&&
x_2(z) = q\,z\,\frac{b_{-}-z}{q^{2}b_{-}z-1}\frac{(q^{3}z^{2}-1)}{(qz^{2}-1)}.
\end{align}
\end{enumerate}

In the current setting, our solutions cannot reproduce the results of de Vega-Gonzalez-Ruiz for $l_-\neq l_+$. We expect that after certain modifications of our construction of reflection-invariant opers, i.e. the condition $s(z^{-1})=s(z)$, these cases can be recovered. This is however, beyond the scope of the current manuscript.

\subsection{XXX \texorpdfstring{$Y(\mathfrak{sl}_2)$}{Y(sl_2)} Open Chain by Frassek-Szecsenyi \cite{Frassek:2015aa}}

The Bethe Ansatz equations for open XXX spin chains discovered in \cite{Frassek:2015aa} are given as:
\begin{equation}\label{frassek}
    -\frac{2z_{i}}{(2z_{i}+2)}\frac{(z_{i}-p+1)(z_{i}-q+1)}{(z_{i}+p)(z_{i}+q)}\times \prod_{k=1}^{m}\frac{(z_{i}-z_{k}+1)(z_{i}+z_{k}+2)}{(z_{i}-z_{k}-1)(z_{i}+z_{k})}
    \times
    \frac{(z_{i})^{2L}}{(z_{i}+1)^{2L}}=1, \quad 1\leq i\leq m,
\end{equation}
where $z_{i}$ are Bethe roots and $p$ and $q$ are boundary parameters.
In the equations \ref{bethe using additive opers}, if we let $\epsilon=1$ and all $a_{j}$'s to be $1/2$ and change variables as $z_{k}=s_{k}-\frac{1}{2}$, then we recover \eqref{frassek} using the geometric construction via $\epsilon$-opers in Section \ref{epsilon opers}.

\bibliography{cpn1}
\end{document}